\newcommand\captionof[1]{\def\@captype{#1}\caption}
\theoremstyle{definition}
\newtheorem{theorem}{Theorem}[section]
\newtheorem{claim}{Claim}[section]
\newtheorem{proposition}{Proposition}[section]
\newtheorem{definition}{Definition}[section]
\newcommand{\R}{\mathbb{R}}
\numberwithin{equation}{section}
\title{On the stability of optimization algorithms given by discretizations of the Euler-Lagrange ODE}
\author{\small
%\hspace{.5cm}
\textbf{Rachel Walker} \hspace{4cm} \textbf{Emily Zhang} \\
\vspace{-.3cm} 
\hspace{.5cm} Central Washington University
\hspace{1.5cm} Massachusetts Institute of Technology}
\date{\vspace{-1cm}}
\begin{document}

\maketitle
\singlespacing
\normalsize
\begin{abstract}
    The derivation of second-order ordinary differential equations (ODEs) as continuous-time limits of optimization algorithms has been shown to be an effective tool for the analysis of these algorithms. Additionally, discretizing generalizations of these ODEs can lead to new families of optimization methods. We study discretizations of an Euler-Lagrange equation which generate a large class of accelerated methods whose convergence rate is $O(\frac{1}{t^p})$ in continuous-time, where parameter $p$ is the order of the optimization method. Specifically, we address the question asking why a naive explicit-implicit Euler discretization of this solution produces an unstable algorithm, even for a strongly convex objective function. We prove that for a strongly convex $L$-smooth quadratic objective function and step size $\delta<\frac{1}{L}$, the naive discretization will exhibit stable behavior when the number of iterations $k$ satisfies the inequality $k < (\frac{4}{Lp^2 \delta^p})^{\frac{1}{p-2}}$. Additionally, we extend our analysis to the implicit and explicit Euler discretization methods to determine end behavior.
\end{abstract}

\section{Introduction}
The phenomenon of acceleration is currently a heavily researched topic in convex optimization. Su et.\ al.\ first explored the concept of taking continuous time limits of optimization methods in an attempt to better understand acceleration \cite{su2014differential}. More recently, high resolution continuous-time ODEs have been derived, and they shine light on how gradient correction leads to a faster convergence rate \cite{shi2018understanding}. This new perspective has also motivated the use of various discretization schemes on continuous-time problems to generate new families of optimization algorithms \cite{shi2019acceleration}.

Wibisono et.\ al.\ derived a second order Euler-Lagrange ODE whose solution minimizes an objective function $f$ at an exponential rate with order $p$, for any distance generating function. When attempting to discretize this ODE, the authors found that the system of two update equations given by an explicit-implicit Euler discretization of the ODE initially converges to the minimizer of $f$, oscillates around the minimizer, and eventually diverges. This occurs even for strongly convex quadratic functions and a Euclidian distance generating function, as shown in Figure (\ref{fig:badnaive}) on the next page. The reason for divergence here is unclear. In their work, the authors solved the problem of instability by introducing a rate matching discretization, which utilizes a third update sequence. However, as evident for the $p=3$ case, the implementation of this third sequence is difficult \cite{nesterov2008accelerating}.

\begin{figure}
    \centering
    \includegraphics[width=.7\textwidth]{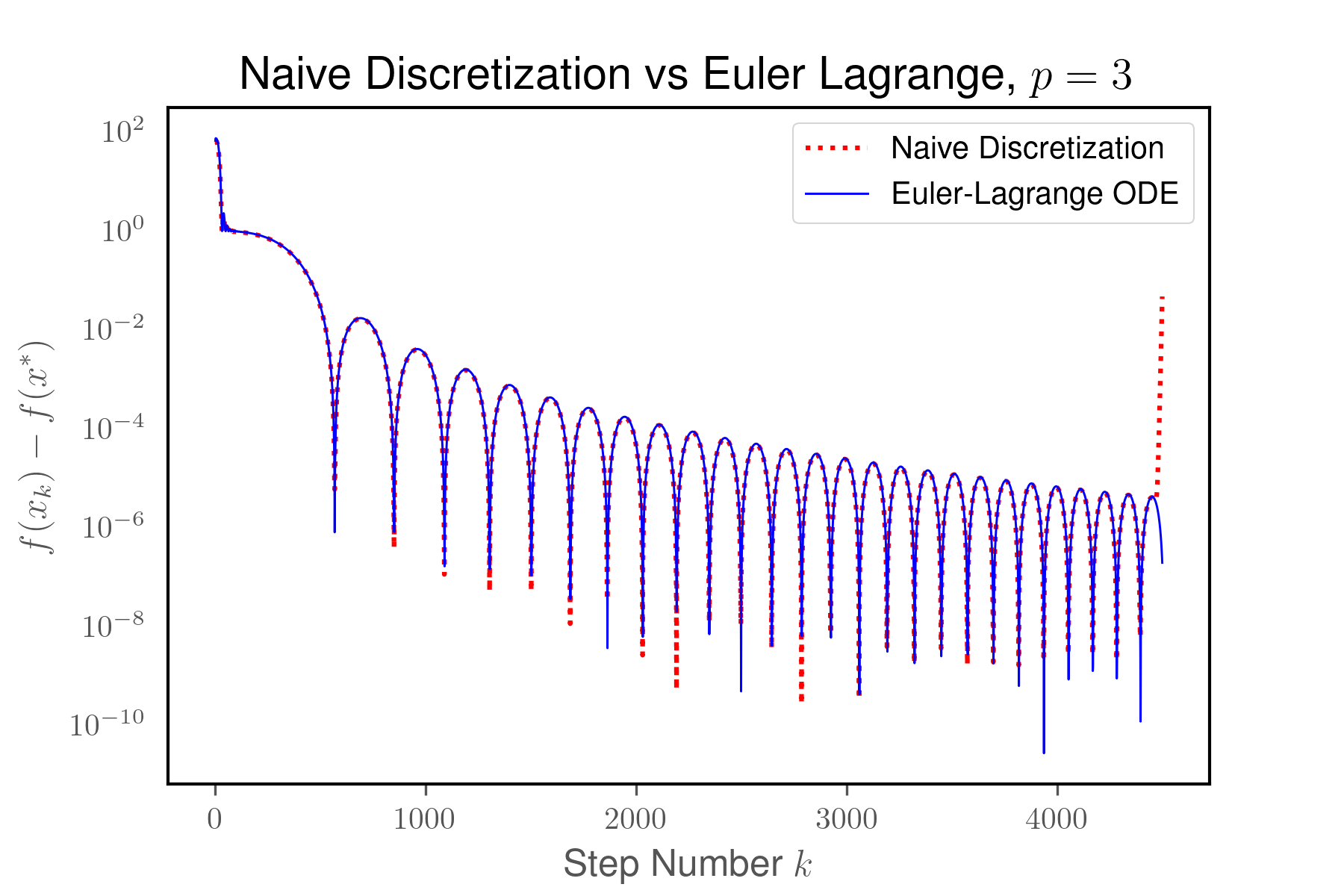}
    \caption{The Euler-Lagrange ODE derived by Wibisono et.\ al.\ and the explicit-implicit (naive) Euler discretization eventually diverging}
    \label{fig:badnaive}
\end{figure}

Recently, there have been explorations of the convergence rates and stability of various discretization methods. Zhang et.\ al.\ showed that a Runge-Kutta discretization preserves the acceleration seen in the ODE being discretized when strong assumptions are made on the smoothness of the objective function \cite{zhang2018direct}. Our objective is to forgo the assumption of smoothness on the objective function and bound the number of iterations for which the explicit-implicit discretization can be run until it diverges. Additionally, we adopt ideas from numerical analysis and recent work on stabilizing gradient descent to analyze the end behavior of the explicit and implicit Euler method applied to the Euler-Lagrange ODE \cite{eftekhari2018explicit}. We hope that this work will provide better insight into the behavior of the explicit-implicit discretization scheme, present an analysis framework that can be used to analyze systems of discrete update equations, and also provide easier-to-implement alternatives to the rate matching discretization.

\subsection{Problem Setting}
Throughout this paper, we consider the optimization problem 
\begin{equation}
    x^* =\arg\min_{x \in \R^d} f(x), \label{eq:minimize}
\end{equation}
where $f(x)=\frac{1}{2}(x-x^*)^T A (x-x^*)$ is a convex function $f: \R^d \rightarrow \R$ with some unique minimizer $x^*\in\R^d$ that satisfies the optimality condition $\nabla f(x^* ) = Ax^*= \vec{0}$, and $A$ is a positive definite, symmetric $d\times d$ matrix. 

We mainly focus on the quadratic objective function as the linear gradient allows for easier analysis. Since the issue of divergence is present even for a strongly quadratic $f$, an understanding of why and where divergence happens in this case will be insightful for understanding the phenomenon for a more general convex function. We note that for a non-quadratic function, we could understand the behavior at a certain iteration using our approach by linearizing the gradient of the objective function at that point. The importance of minimizing quadratic objective functions has many applications in data analysis and statistical machine learning, such as a least squares loss function for a neural network. 

\subsection{A Continuous Time Solution}
Wibisono et.\ al.\ derived the following Euler-Lagrange ODE, whose solution minimizes $f$ at an exponential rate \cite{WibisonoE7351}. When in the Euclidean setting, this ODE is

\begin{equation}
    \ddot{X_t} + \frac{p+1}{t}\dot{X_t} + Cp^2t^{p-2} \nabla f(X_t) = 0 \label{eq:euler-lag}
\end{equation}
where $X_t :=X(t)$, $C > 0$ is a constant, and $p \geq 2$ is the parameter which describes the order of the optimization method. Let $X^*$ be the minimizer of the objective function $f$. The authors showed that in continuous time, (\ref{eq:euler-lag}) has the convergence rate of
\begin{equation}
    f(X_t) - f(X^*) < O \left( \frac{1}{t^p}\right)_{\textstyle \raisebox{2pt}{.}}  \label{eq:dreamconverge}
\end{equation}

If $p = 2$, equation (\ref{eq:euler-lag}) is the continuous time limit of Nesterov's method derived by Su et.\ al.\ \cite{su2014differential}. When $p = 3$, equation (\ref{eq:euler-lag}) is the Euclidean case of the continuous time limit of cubic-regularized Newton’s method \cite{nesterov2008accelerating}. Due to the order $p$ exponential convergence rate of this Euler-Lagrange ODE, it is of interest to derive a discretization of this ODE with a convergence rate that matches the one given in (\ref{eq:dreamconverge}). However, as mentioned previously, the explicit-implicit discretization eventually diverges even for a strongly convex quadratic objective function. 

The phenomenon of a numerical method diverging from the exact solution of an ODE is known as stiffness \cite{higham1993stiffness}. While there is no precise mathematical definition on stiffness, it is generally agreed upon that an ODE is stiff if explicit numerical methods do not work \cite{higham1993stiffness}. 
% However, Wibisono et. al. derive a naive forward-backward Euler discretization of the Euler-Lagrange which is observed to eventually diverge even for a two-dimensional convex quadratic objective function \cite{WibisonoE7351}. To fix this, the authors add a third rate matching sequence for which they are able to establish a convergence rate of $O(1/t^p)$. However, it is not understood why adding the third series gets rid of the eventual divergence.

% \subsection{Stability stuff}
% The phenomenon of a numerical method diverging from the exact solution of an ODE is known as stiffness \cite{higham1993stiffness}. While there is no precise mathematical definition on stiffness, it is generally agreed upon that an ODE is stiff if explicit numerical methods do not work \cite{higham1993stiffness}. To analyze the stability of a numerical method, we define a stability function $R: \mathbb{C} \rightarrow \mathbb{C}$ which describes how much the previous term is magnified by. We then determine the \emph{stability domain} of the numerical method, which is defined as followed.

% \begin{definition}[Stability Domain \cite{bookchapter}]
% The stability domain of a one-step method is the set
% \begin{equation}
%     S := \{ z \in \mathbb{C} : |R(z)| \leq 1\}.
% \end{equation}
% \end{definition}

% \begin{definition}[A-Stable Method \cite{bookchapter}]
% If for the stability domain $S$ of a one-step method holds that $\mathbb{C}_0^{-} \subset S$, then this one-step method is called A-Stable.
% \end{definition}

% If a method is A-Stable, then i

\subsection{Discretization Schemes}

We explore the stability of three different Euler methods on the Euler-Lagrange ODE. These discretization schemes are defined as follows for any system of two continuous variables $X_t$ and $Z_t$ such that $\dot{X}_t = f_1(X_t, Z_t)$ and $\dot{Z}_t = f_2(X_t, Z_t)$. Let $\delta$ be the step size and let $x_0, z_0$ be initialized to the initial value of the ODE that we are trying to discretize.

\begin{enumerate}[label=(\alph*)]
    \item Explicit Euler Method
    \begin{align}
    \begin{split}
        x_{k+1} &= x_k + \delta f_1(x_k, z_k) \\
        z_{k+1} &= z_k + \delta f_2(x_k, z_k) \label{eq:exact_explicit}
    \end{split}
    \end{align}
    
    \item Implicit Euler Method
    \begin{align}
    \begin{split}
        x_{k+1} &= x_k + \delta f_1(x_{k+1}, z_{k+1}) \\
        z_{k+1} &= z_k + \delta f_2(x_{k+1}, z_{k+1}) \label{eq:exact_implicit}
    \end{split}
    \end{align}
    
    \item Explicit-Implicit Euler Method
    \begin{align}
    \begin{split}
        x_{k+1} &= x_k + \delta f_1(x_k, z_{k}) \\
        z_{k+1} &= z_k + \delta f_2(x_{k+1}, z_{k+1}) \label{eq:exact_expimp}
    \end{split}
    \end{align}
    
\end{enumerate}

\noindent In order to discretize the Euler-Lagrange using the various discretization methods listed above, we rewrite (\ref{eq:euler-lag}) as a system of two first order ODEs and use the identification $t = \delta k$:
\begin{align}
\begin{split}
    \dot{X_t} &= f_1(X_t, Z_t) = \frac{p}{t}(Z_t - X_t) \\
    \dot{Z_t} &= f_2(X_t, Z_t) = -Cp t^{p-1} \nabla f(X_t).
\end{split}
\label{eq:Euler-LagrangeSystem}
\end{align}

\subsection{Our Contribution}
We analyze the stability of discretizations of the Euler-Lagrange ODE given in (\ref{eq:Euler-LagrangeSystem}) using various discretization schemes. Our results are summarized in the following table.
\begin{table}[H]
\begin{tabular}{|l|c|c|}
\hline
 & $p=2$ & $p>2$ \\ \hline
Explicit Euler method & diverging & diverging \\ \hline
Implicit Euler method & converging & converging \\ \hline
Explicit-Implicit Euler method & stable & \begin{tabular}[c]{@{}c@{}}diverging; exhibits stable behavior \\ when $k < \sqrt[p-2]{\frac{4}{CLp^2 \epsilon}}$\end{tabular} \\ \hline
\end{tabular}
\end{table}

As expected, the explicit method results in a system of update equations whose iterations diverge, and the implicit method gives update equations that converge, but are impractical to implement. We find that the explicit-implicit discretization scheme serves as a middle ground between these two popular methods; it is practical to implement and has much better behavior that the explicit method.
\section{Our Approach}
In this section, we describe the approach that we take to analyze the behavior of various discretizations of the Euler-Lagrange ODE. We are primarily interested in determining whether a system of update equations given by a certain discretization scheme has converging, diverging, or stable long-term behavior. To be precise, we give the following definitions. In this paper, a system of update equations given by a discretization method is 
\begin{enumerate}[label = (\alph*)]
    \item \textbf{converging} to the minimizer if the \textit{upper bound} on $|x_k - x^*|$ is decreasing as $k$ increases, \footnote{Note that $|x_k - x^*|$ does not have to be a monotonically decreasing sequence in order to be converging.}
    \item \textbf{diverging} from the minimizer if the \textit{upper bound} on $|x_k - x^*|$ is increasing as $k$ increases, and
    \item \textbf{stable} if, for sufficiently large $N$, $|x_k - x^*| = |x_{k+1} - x^*|$ for all $k>N$.
\end{enumerate}
Oftentimes, an optimization problem has very large dimensions; that is $x$, the value that we are updating, is multi-dimensional. For our purposes, however, the analysis of a system of update equations on one-dimensional $x$ is sufficient to study the behavior of a certain discretization scheme applied to the Euler-Lagrange ODE. This is stated more formally and proved in the following proposition.
\begin{proposition}
\emph{
\label{prop:oned}
In order to study the stability of update equations derived from various discretization methods, we can focus on cases where $x$ is one-dimensional without loss of generality.
}
\end{proposition}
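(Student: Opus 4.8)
The plan is to exploit the fact that the objective is quadratic, so that its gradient $\nabla f(x) = A(x-x^*)$ is linear and $A$ is symmetric positive definite. First I would invoke the spectral theorem to write $A = Q\Lambda Q^T$, where $Q$ is orthogonal and $\Lambda = \mathrm{diag}(\lambda_1,\dots,\lambda_d)$ collects the positive eigenvalues of $A$. The key observation is that all three schemes (\ref{eq:exact_explicit})--(\ref{eq:exact_expimp}), when applied to the system (\ref{eq:Euler-LagrangeSystem}), involve $x_k$, $z_k$, and the gradient only through linear operations. Consequently a single orthogonal change of coordinates should decouple the $d$-dimensional recurrence into $d$ independent scalar recurrences.

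Concretely, I would introduce the shifted, rotated variables $\tilde{x}_k = Q^T(x_k - x^*)$ and $\tilde{z}_k = Q^T(z_k - x^*)$, noting that the fixed point of the auxiliary sequence $z_k$ is also $x^*$. Substituting into the update equations and using the identity $Q^T A = \Lambda Q^T$, every occurrence of $\nabla f(x_k)=A(x_k-x^*)$ becomes $\Lambda \tilde{x}_k$, while the remaining terms are already expressed through differences such as $\tilde{z}_k - \tilde{x}_k$ that transform cleanly under $Q^T$. Because $\Lambda$ is diagonal, the transformed system separates completely: for each eigenvalue $\lambda_i$, the pair $(\tilde{x}_k^{(i)}, \tilde{z}_k^{(i)})$ evolves under exactly the one-dimensional version of the same scheme, with the scalar $\lambda_i$ playing the role of $A$. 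Finally, since $Q$ is orthogonal it preserves the Euclidean norm, so $|x_k - x^*| = |\tilde{x}_k| = \bigl(\sum_{i} (\tilde{x}_k^{(i)})^2\bigr)^{1/2}$. Hence the upper bound on $|x_k - x^*|$ is controlled coordinatewise, and the full iteration diverges, converges, or is stable exactly when the scalar iterations do — it diverges if any single component diverges and converges only if all components converge. This reduces the study of stability to the one-dimensional recurrence governed by a generic eigenvalue of $A$, which is what the later sections analyze.

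I expect the main subtlety to lie in the implicit and explicit-implicit schemes rather than the explicit one: there the update requires solving a linear system for $x_{k+1}$ (and $z_{k+1}$), and one must confirm that this implicit solve commutes with the orthogonal change of variables. This goes through because the coefficient matrices appearing in the solve are polynomials in $A$, so they share the eigenbasis $Q$ and diagonalize simultaneously, causing the solve itself to decouple. This is precisely where linearity of the gradient is essential — a nonlinear $\nabla f$ would not decouple under $Q$ — and it is also why the reduction is stated only for the quadratic setting, consistent with the remark in the problem setting that a general objective can be handled locally by linearizing its gradient.
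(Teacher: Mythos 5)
Your proposal is correct and follows essentially the same route as the paper: both diagonalize $A$ via the spectral theorem, pass to the rotated, shifted coordinates $\tilde{x} = P^T(x - x^*)$, and observe that the recurrence decouples into $d$ independent scalar iterations governed by the eigenvalues of $A$. In fact your write-up is more complete than the paper's, which only diagonalizes the objective and asserts the decoupling --- you additionally verify that the update equations themselves (including the implicit solves, via simultaneous diagonalizability of polynomials in $A$) transform coordinatewise and that orthogonality of the eigenbasis preserves $|x_k - x^*|$, both of which the paper leaves implicit.
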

\begin{proof}
We rewrite $f(x)$, a general objective function where $x$ is $d$-dimensional and $A$ is symmetric, as follows:
\begin{align*}
f(x)&=\frac{1}{2}(x-x^*)^T A(x-x^*)\\
&=\frac{1}{2}(x-x^*)^T PDP^T(x-x^*)\\
&=\frac{1}{2}(P^T(x-x^*))^T DP^T(x-x^*)\\
&= \frac{1}{2}\Tilde{x}^TD\Tilde{x}
\end{align*}
where $\Tilde{x}:=P^T(x-x^*)$, $P$ is the matrix of eigenvectors of $A$, and $D$ is the diagonal matrix of eigenvalues of $A$.

Since all dimensions of $\Tilde{x}$ update independently of each other, the case where $\Tilde{x}$ and $x$ are one-dimensional is without loss of generality. 
\end{proof}

We now make several definitions which help us set up the framework that we will use to analyze various discretizations of the Euler-Lagrange ODE. We let $u_i:=    \begin{pmatrix}
\Tilde{x_i} \\
z_i
\end{pmatrix}$, where $\Tilde{x}$ is defined as in the proof of Proposition 2.1, and consider discretizations of the Euler-Lagrange ODE of the form \begin{equation}
u_{k+1} = M_k
\label{eq:updateform}
u_k.\end{equation} Additionally, we define $M_\infty := \lim_{k\to\infty} M_k$ and $u_\infty:=\lim_{k\to\infty} u_k$. Finally, we define the stability function, which tells us the end behavior of systems of update equations in the form given by equation (\ref{eq:updateform}). Proposition (\ref{proposition:important}) shows how the stability function determines end behavior.

\begin{definition}
\emph{
We define $R(M_k):=|\lambda_{k, \max}|$ where $\lambda_{k, \max}$ is the eigenvalue of $M_k$ with the largest magnitude, and the \textbf{stability function} is given by
\begin{equation*}
    R(M_\infty) =  \lim_{k \to \infty} R(M_k)
\end{equation*}
}
\end{definition}
\begin{proposition}
\emph{
\label{proposition:important}
A discretization method will be 
\begin{enumerate}[label = (\alph*)]
    \item converging to the minimizer when $R(M_\infty) < 1$.
    \item stable when $R(M_\infty) = 1$.
% \begin{enumerate}[label=(\alph*)] % i like these!! you should keep these in
% \item converging to the minimizer or stable when $R(M_\infty) \leq 1$ 
% \item unstable when $R(M_\infty) > 1$
% \end{enumerate}
\end{enumerate}
}
\end{proposition}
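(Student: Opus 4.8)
The plan is to connect the long-run size of the iterates $u_k$ to the spectral radius of the limiting matrix $M_\infty$, and then read off the behavior of $|x_k - x^*| = |\tilde{x}_k|$, which is the first component of $u_k$ in the orthogonal coordinates of Proposition \ref{prop:oned} (recall $\tilde{x}^* = 0$, so $|x_k - x^*| = |\tilde{x}_k| \le |u_k|$). Since $u_{k+1} = M_k u_k$, we have the product representation $u_k = \left( \prod_{j=0}^{k-1} M_j \right) u_0$, so any bound on $|u_k|$ furnishes an upper bound on the quantity of interest. Because we only care about end behavior and $M_k \to M_\infty$, I would fix a large index $N$ and study the tail iterates $u_{N+n}$, which to leading order behave like $M_\infty^n u_N$.

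The key analytic input is the relationship between spectral radius and the growth of iterated operator norms: by Gelfand's formula $\lim_{n\to\infty}\|M_\infty^n\|^{1/n} = R(M_\infty)$, so for every $\epsilon > 0$ there is a constant $C_\epsilon$ with $\|M_\infty^n\| \le C_\epsilon\,(R(M_\infty)+\epsilon)^n$. For part (a), if $R(M_\infty) < 1$ I would pick $\epsilon$ small enough that $R(M_\infty)+\epsilon < 1$; then $\|M_\infty^n\| \to 0$ geometrically, and the upper bound $|x_{N+n}-x^*| \le |u_{N+n}| \le \|M_\infty^n\|\,|u_N|$ is eventually decreasing, which is precisely the definition of converging. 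For part (b), when $R(M_\infty) = 1$ every eigenvalue of $M_\infty$ has modulus at most $1$ and at least one has modulus exactly $1$; assuming $M_\infty$ is diagonalizable (which I would verify directly from the explicit form of the discretization matrices), the powers $M_\infty^n$ stay uniformly bounded. Decomposing $u_N$ in the eigenbasis, the components along eigenvalues of modulus $<1$ decay to zero while those along the modulus-one eigenvalues retain constant magnitude, so $|x_k-x^*|$ settles to a fixed value.

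The main obstacle is twofold. First, $M_k$ is not constant: it only converges to $M_\infty$, so replacing the genuine product $\prod_{j=N}^{N+n-1} M_j$ by the clean power $M_\infty^n$ requires controlling the accumulated perturbation from the errors $E_j := M_j - M_\infty$, which satisfy $\|E_j\| \to 0$; I would argue these are negligible against the geometric decay in case (a) and subdominant to the leading mode in case (b). Second, and more delicate, the exact equality $|x_{k+1}-x^*| = |x_k-x^*|$ demanded by the definition of stable is stronger than mere boundedness: a complex unit-modulus eigenvalue would make $|\tilde{x}_k|$ oscillate, so stability in this precise sense forces the surviving eigenvalue to be real, namely $\pm 1$, yielding either a constant or a sign-alternating sequence of fixed magnitude. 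The crux is therefore to show that all modes of modulus strictly less than one die out in the limit and that the dominant mode is a real eigenvalue on the unit circle that preserves $|\tilde{x}_k|$ exactly.
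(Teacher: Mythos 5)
Your route is genuinely different from the paper's. The paper argues in two lines: from $u_k = M_{k-1}\cdots M_0 u_0$ it asserts that when every $M_i$ has all eigenvalue magnitudes below $1$ we get $\|u_i\| < \|u_{i-1}\|$ at every step, hence a decreasing upper bound on $\|\tilde{x}_i\| \le \|u_i\|$; for part (b) it asserts that the component of the state along the eigenvector(s) of $M_\infty$ with eigenvalue $1$ keeps its size while the other components die out. Your Gelfand-formula plan replaces the paper's per-step claim, and for part (a) it is actually the sounder of the two: spectral radius below $1$ does not force $\|M_i u\| < \|u\|$ for non-normal $M_i$ in a fixed norm, so the paper's step-by-step monotonicity assertion is false as stated, whereas your scheme --- a power bound $\|M_\infty^n\| \le C_\epsilon (R(M_\infty)+\epsilon)^n$ with $R(M_\infty)+\epsilon<1$, upgraded to an adapted norm in which $\|M_\infty\| < 1$ and hence $\|M_j\| < 1$ for all large $j$ since $\|M_j - M_\infty\| \to 0$ --- closes case (a) rigorously, perturbation and all.

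Two genuine gaps remain, both in part (b). First, your perturbation step fails there as described: when $R(M_\infty) = 1$ there is no geometric margin to absorb the errors $E_j = M_j - M_\infty$, and $\|E_j\| \to 0$ alone does not control the product --- the scalar example $M_j = 1 + \frac{1}{j}$ has $E_j \to 0$ yet $\prod_{j=1}^{n} M_j = n+1 \to \infty$. You would need a quantitative rate, e.g.\ $\sum_j \|E_j\| < \infty$, verified from the explicit entries of $M_k$, and you do not establish one; note also that your framework presumes an actual limit matrix $M_\infty$, while the paper's stability function is defined as $\lim_k R(M_k)$ and the entries of $M_k$ can diverge (as they do for the explicit-implicit scheme with $p>2$), so the Gelfand object need not exist in all cases the proposition is meant to cover. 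Second, you correctly observe that the exact equality $|x_{k+1}-x^*| = |x_k - x^*|$ in the paper's definition of stable forces a real unit eigenvalue, but you leave this ``crux'' unproven --- and it cannot be proven, because in the paper's own $p=2$ application the unit-modulus eigenvalues are the complex pair $\frac{2-c}{2} \pm \frac{\sqrt{4c-c^2}}{2}i$, under which the first coordinate $|\tilde{x}_k|$ oscillates rather than freezes. You have in fact located a defect shared by the paper, whose proof of (b) quietly assumes the surviving eigenvalue equals $1$; but since your proposal neither resolves the issue nor weakens the definition of stability (say, to boundedness of $|x_k - x^*|$ away from $0$ and $\infty$), part (b) remains open in your write-up.
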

\begin{proof}
Computing $u_k$ from $u_0$, we have
\begin{equation}
u_k =  M_{k-1}M_{k-2}\dots M_1 M_0 u_0.
\end{equation}
When all the eigenvalues of $M_i$ have magnitude less than 1, then $\|u_i\|<\|u_{i-1}\|$. Since $\|\Tilde{x_i}\|\leq\|u_i\|$, the upper bound on  $\|\Tilde{x_i}\|$ is also strictly decreasing when all eigenvalues' magnitudes are less than 1. Letting $k$ go to $\infty$ proves part $(a)$ of the proposition.

When $R(M_\infty)=1$, the part of $x_\infty$ that lies along the eigenvector of $M_\infty$ associated with the eigenvalue(s) equal to 1 will always remain the same size. Parts of $x_\infty$ that lie along other eigenvector(s) will go to 0. Thus, the value of $x_k$ for sufficently large $k$ will not change, and the iterations are stable.
\end{proof}

% \newpage

We now state our methodology for analyzing the stability of various discretizations of the Euler-Lagrange ODE. For reasons previously stated, we proceed only with cases of 1-dimensional $x$ and $A$.\\

% \begin{mdframed}
\noindent\textbf{Analyzing Convergence of Discretizations of the Euler-Lagrange ODE:}
\begin{enumerate}
    \item Write the discretization of the Euler-Lagrange ODE given in equation (\ref{eq:Euler-LagrangeSystem}) in the form
    \begin{equation}
        \begin{bmatrix} 
        x_{k+1} \\ 
        z_{k+1} 
        \end{bmatrix} = M_k 
        \begin{bmatrix}
        x_k \\ 
        z_k 
        \end{bmatrix}_{\textstyle \raisebox{2pt}{.}}  \label{eq:matrix_form}
    \end{equation}
    \item Determine the stability function $R(M_\infty)$.
    \item Analyze stability conditions for the method.
    \begin{itemize}
        \item If $R(M_\infty) < 1$, the iterations will be converging to the minimizer. 
        \item If $R(M_\infty) = 1$, the iterations will be stable.
        \item If $R(M_\infty) > 1$, then we determine the largest $k$ for which $R(k) < 1$ in terms of parameters $A$, $p$, and $\delta$ in order to get a bound on when the iterations exhibit stable behavior.
    \end{itemize}
\end{enumerate}

% \end{mdframed}

\section{Explicit-Implicit Euler Method}
The update equations given by the discretization of (\ref{eq:Euler-LagrangeSystem}) using the explicit-implicit method described in (\ref{eq:exact_expimp}) and the identification $t = \delta k$ are as follows:
\begin{align}
    \begin{split}
       \frac{x_{k+1}-x_{k}}{\delta} &= \frac{p}{t}(z_k - x_k) \\
        \frac{z_{k}-z_{k-1}}{\delta} &= - Cp t^{p-1}\nabla f(x_{k}). \label{eq:expimplicit_update}
    \end{split}
\end{align}

This set of update equations eventually diverges after approaching and oscillating around the minimizer, yet it is unknown why this occurs \cite{WibisonoE7351}. We present the following theorem, which describes the behavior of the explicit-implicit discretization, and an outline of our proof. The full proof is in Appendix A.

\begin{theorem}
\label{thm: main_theorem}
\emph{
Let $f(x): \R^d\xrightarrow{}\R$ be an $L$-smooth function defined as\\
\begin{equation}
f(x)=\frac{1}{2}(x-x^*)^T A(x-x^*)
\end{equation}
where $x^*\in \R^d$ is the unique minimizer with $\nabla f(x^*)=\vec{0}$ and $A$ is a positive definite, symmetric $d\times d$ matrix. Let $\delta < \frac{1}{L}$ and $\epsilon = \delta^p$. Then, after we go out enough iterations in the system of update equations given by equation $(\ref{eq:expimplicit_update})$ such that $k>p$ and take $C < \frac{1}{\epsilon L}$, we have the following properties:
\begin{enumerate}[label=(\alph*)]
    \item If $p = 2$, the naive method exhibits stable end behavior.
    \item If $p > 2$, the naive method will exhibit stable behavior when 
    \begin{equation*}
    k < \left(\frac{4}{CLp^2 \epsilon}\right)^{\frac{1}{p-2}}_{\textstyle \raisebox{2pt}{.}}
    \end{equation*}
    % \begin{equation*}
    % k > \sqrt[p-2]{\frac{4}{Lp^2 \epsilon}}.
    % \end{equation*}
\end{enumerate}
}
\end{theorem}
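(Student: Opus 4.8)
The plan is to follow the three-step methodology of Section 2: write the explicit--implicit update as $u_{k+1} = M_k u_k$, compute the spectral radius $R(M_k)$, and locate the value of $k$ at which it crosses $1$. By Proposition \ref{prop:oned} I reduce to scalar $x$ and replace $A$ by a single eigenvalue $0 < a \le L$, centering so that $x^* = 0$ and $\nabla f(x) = a x$. Using the identification $t = \delta k$ and $\epsilon = \delta^p$, the first (explicit) equation gives $x_{k+1} = (1 - \tfrac{p}{k})x_k + \tfrac{p}{k}z_k$, and substituting this into the second (implicit) equation $z_{k+1} = z_k - C p \epsilon a\, k^{p-1} x_{k+1}$ produces
\[
M_k = \begin{pmatrix} 1 - \frac{p}{k} & \frac{p}{k} \\ -C p \epsilon a\, k^{p-1}\!\left(1 - \frac{p}{k}\right) & 1 - C p^2 \epsilon a\, k^{p-2}\end{pmatrix}.
\]
The key structural observation, which makes the whole problem tractable, is that the off-diagonal coupling cancels in the determinant: a direct computation gives $\det M_k = 1 - \tfrac{p}{k}$, while $\operatorname{tr} M_k = 2 - \tfrac{p}{k} - C p^2 \epsilon a\, k^{p-2}$. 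Writing $D = 1 - \tfrac{p}{k}$ and $B = C p^2 \epsilon a\, k^{p-2}$, the eigenvalues solve
\[
\lambda^2 - \Bigl(2 - \tfrac{p}{k} - B\Bigr)\lambda + D = 0.
\]

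Next I analyze $R(M_k)$ through the sign of the discriminant $\Delta = (\operatorname{tr} M_k)^2 - 4D$. For $k > p$ we have $D \in (0,1)$. In the complex-eigenvalue regime ($\Delta < 0$) both eigenvalues have modulus $\sqrt{D} = \sqrt{1 - p/k} < 1$, so the iteration is strictly contracting with limiting value $R(M_\infty) = 1$. In the real-eigenvalue regime I apply the Schur--Cohn/Jury condition: both roots of $\lambda^2 - T\lambda + D$ lie in the open unit disk iff $|D| < 1$ and $|T| < 1 + D$. Since $|D| = D < 1$ holds automatically, the binding constraint is $|T| < 1 + D$; writing $s = 2 - p/k$ so that $T = s - B$ and $1 + D = s$, this reduces to the clean inequality $0 < B < 2s = 4 - \tfrac{2p}{k}$. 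Thus $R(M_k) < 1$ precisely while $B < 4 - \tfrac{2p}{k}$, and $R(M_k) > 1$ once $B$ exceeds this threshold.

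It remains to feed in the two cases. For $p = 2$, $B = 4C\epsilon a$ is constant; the hypotheses $C < \tfrac{1}{\epsilon L}$ and $a \le L$ give $C \epsilon a < 1$, hence $B < 4$, and one checks that for large $k$ the pair $(D, T)$ sits in the complex regime, so $R(M_k) = \sqrt{1 - 2/k} < 1$ with $R(M_\infty) = 1$; by Proposition \ref{proposition:important}(b) the method is stable, giving part (a). For $p > 2$, $B = C p^2 \epsilon a\, k^{p-2} \to \infty$, so the threshold $B < 4 - \tfrac{2p}{k}$ is eventually violated and $R(M_k) \to \infty$ (diverging); solving the leading-order condition $B < 4$ and replacing $a$ by its upper bound $L$ to pass to the worst-case eigenvalue yields $k^{p-2} < \tfrac{4}{CLp^2\epsilon}$, i.e.\ the stated bound $k < \bigl(\tfrac{4}{CLp^2\epsilon}\bigr)^{1/(p-2)}$. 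The main obstacle I anticipate is the bookkeeping around the real/complex transition: I must verify that as $k$ grows the system actually enters (for $p=2$) or re-enters from the real side (for $p>2$) the correct regime, and that the modulus of the dominant real eigenvalue first reaches $1$ exactly at $B = 2s$ rather than at the interior turning point of $\Delta$. Reconciling the exact threshold $4 - \tfrac{2p}{k}$ with the clean stated constant $4$ also requires care, and I would note explicitly that replacing $a$ by $L$ and $4 - \tfrac{2p}{k}$ by $4$ produces a (slightly conservative) sufficient condition rather than a sharp one.
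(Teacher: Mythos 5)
Your proposal is correct and arrives at the paper's bound, but through a genuinely different middle step. Where the paper computes both eigenvalues of $M_k$ explicitly and runs a case analysis --- Claim 1 showing the equal-modulus (complex) regime has $|\lambda_{1,2}|=\sqrt{1-p/k}<1$, Claim 2 ruling out divergence through $\lambda_1$, then a right-triangle/triangle-inequality argument reducing $y+\sqrt{y^2-4x}\le 4$ to $a_kb_k<4$ --- you exploit the determinant identity $\det M_k = 1-\tfrac{p}{k}$ and invoke the Schur--Cohn/Jury criterion, which yields the exact finite-$k$ threshold $0<B<4-\tfrac{2p}{k}$ in one stroke. This buys a necessary-and-sufficient condition at each $k$ (the paper's triangle trick only establishes $a_kb_k<4$ as \emph{necessary} for stability) and automatically settles the real/complex transition you flagged, since complex roots satisfy $|T|<2\sqrt{D}\le 1+D$ and the unit circle is crossed exactly at $\lambda=-1$, i.e.\ at $B=2s$. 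Two caveats: your implicit half-step evaluates $t$ at $\delta k$, giving $b_k\propto k^{p-1}$, whereas the paper's update uses $\delta(k+1)$ and $(k+1)^{p-1}$ --- immaterial asymptotically, and the paper itself interchanges the two; and your closing remark has the slack backwards --- dropping $-\tfrac{2p}{k}$ \emph{weakens} the condition, so the clean bound $k<\bigl(\tfrac{4}{CLp^2\epsilon}\bigr)^{1/(p-2)}$ is a leading-order (necessary-type) bound with the sharp threshold marginally below it, not a conservative sufficient one (replacing $a$ by $L$, by contrast, is genuinely conservative). The paper glosses over the same point, so this does not put you below its standard of rigor, but keep the $-\tfrac{2p}{k}$ term if you want strict sufficiency.
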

\begin{proof}[Proof Outline] For full proof, see Appendix A.\\
\noindent\textbf{Step 1.}\\ First, we rewrite the update equations in matrix form:
\begin{equation}
    \begin{bmatrix} x_{k+1} \\ z_{k+1} \end{bmatrix}=\underbrace{\begin{bmatrix} (1-\frac{p}{k})I & \frac{p}{k}I \\ -Cp\epsilon(k+1)^{p-1}(\frac{k - p}{k}) A & I-Cp\epsilon(k+1)^{p-1} (\frac{p}{k})A \end{bmatrix}}_{\text{$M_k$}}
    \begin{bmatrix} x_{k} \\ z_{k} \end{bmatrix}_{\textstyle \raisebox{2pt}{.}} 
    \label{eq:fb_discr}
    \end{equation}
\noindent\textbf{Step 2.}\\ Next we determine that
\begin{equation}
    R(M_k) = -\frac{-a_k b_k-a_k+2-\sqrt{(a_k b_k+a_k-2)^2-4(1-a_k)}}{2}
\end{equation}
where $a_k = \frac{p}{k}$ and let $b_k=Cp\epsilon(k+1)^{p-1}A$. \\
Using this, we find the stability function, $R(M_\infty) = \lim_{k\to\infty}R(M_k)$.

\noindent\textbf{Step 3.} \\
By analyzing $R(M_\infty)$, we get the result stated in part $(a)$ of the theorem. By simplifying the inequality $R(M_k) \leq 1$, we get the results stated in part $(b)$ of the theorem.
\end{proof}

\section{Stability of Implicit and Explicit Discretizations of the Euler-Lagrange ODE}

In this section, we analyze the end behavior of discretizations of the Euler-Lagrange ODE via the implicit Euler method (\ref{eq:exact_implicit}) and the explicit Euler method (\ref{eq:exact_explicit}). It is known that in general, the implicit Euler method is A-Stable while the explicit Euler method is not \cite{higham1993stiffness}. We apply the same technique that we applied to the explicit-implicit method in section 3 to learn more about the behavior of the discretization of the Euler-Lagrange ODE using these two methods. Our analysis of the stability functions allows us to precisely graph where the explicit method exhibits stable behavior in Figure \ref{fig:explicit_diverge}.

\subsection{Implicit Euler}

Using the implicit Euler's method described in (\ref{eq:exact_implicit}) and the identification $t = \delta k$, we can write a discretization of (\ref{eq:Euler-LagrangeSystem}) as
\begin{align}
    \begin{split}
       \frac{x_{k}-x_{k-1}}{\delta} &= \frac{p}{t}(z_k - x_k) \\
        \frac{z_{k}-z_{k-1}}{\delta} &= - Cp t^{p-1}\nabla f(x_{k}). \label{eq:implicit_update}
    \end{split}
\end{align}

\begin{theorem} 
\emph{Let $f(x)$ be a function as defined in Theorem (\ref{thm: main_theorem}). Then the discretization of the Euler-Lagrange ODE using the implicit Euler scheme given in (\ref{eq:implicit_update}) will be converging as $k \to \infty$.}
\end{theorem}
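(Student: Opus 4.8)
The plan is to follow the three-step methodology of Section 2, adapting it to the implicit scheme. By Proposition \ref{prop:oned} I may take $x$ and $A$ to be one-dimensional, writing $A=a$ for a positive eigenvalue of the original matrix and using $\nabla f(x_k)=a\,x_k$ in coordinates centered at the minimizer. The essential new feature compared with the explicit-implicit case is that the implicit update \eqref{eq:implicit_update} evaluates both $f_1$ and $f_2$ at the \emph{new} iterate, so it does not directly express $u_k$ as a matrix times $u_{k-1}$; instead it yields an equation $N_k u_k = u_{k-1}$ that must be inverted.

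First I would rearrange \eqref{eq:implicit_update} using $t=\delta k$ and $\epsilon=\delta^p$ to obtain
\begin{equation*}
N_k u_k = u_{k-1}, \qquad N_k = \begin{bmatrix} 1+\frac{p}{k} & -\frac{p}{k} \\ Cp\epsilon k^{p-1} a & 1 \end{bmatrix},
\end{equation*}
so that the update matrix is $M_{k-1}=N_k^{-1}$. Since the eigenvalues of $N_k^{-1}$ are the reciprocals of those of $N_k$, the stability function $R(M_\infty)$ is governed by the \emph{smallest}-magnitude eigenvalue of $N_k$; concretely $R(M_k)=1/|\lambda_{k,\min}(N_k)|$.

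Next I would compute the spectrum of $N_k$ from its trace $2+\frac{p}{k}$ and determinant $1+\frac{p}{k}+Cp^2\epsilon a\,k^{p-2}$. Writing $a_k=p/k$ and $b_k=Cp\epsilon k^{p-1} a$, the characteristic discriminant reduces to $a_k^2-4a_k b_k$. The key observation is that $a_k b_k=Cp^2\epsilon a\,k^{p-2}$ grows without bound (for $p>2$) or stays constant (for $p=2$), while $a_k^2\to 0$; hence for all sufficiently large $k$ the discriminant is negative and the two eigenvalues of $N_k$ form a complex-conjugate pair. Both then share the magnitude $\sqrt{\det N_k}$, giving the clean formula $R(M_k)=1/\sqrt{1+a_k+a_k b_k}$.

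Finally I would pass to the limit. For $p=2$ we have $a_k\to 0$ and $a_k b_k\to 4C\epsilon a$, so $R(M_\infty)=(1+4C\epsilon a)^{-1/2}<1$; for $p>2$ we have $a_k b_k\to\infty$, so $R(M_\infty)=0<1$. In both cases $R(M_\infty)<1$, and Proposition \ref{proposition:important}(a) delivers convergence. I expect the main obstacle to be the inversion-and-asymptotics bookkeeping rather than any single hard estimate: one must justify that for large $k$ the eigenvalues of $N_k$ are genuinely complex, so that the spectral radius of $M_k$ equals $1/\sqrt{\det N_k}$ instead of being set by an isolated real root, and then confirm $\det N_k>1$ in the limit so that the reciprocal is strictly below $1$. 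Once the complex-pair regime is established the conclusion is immediate, so the delicate point is controlling the sign of the discriminant uniformly for large $k$.
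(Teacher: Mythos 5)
Your proposal is correct and follows essentially the same route as the paper: writing the implicit update as $N_k u_k = u_{k-1}$ with $M_k = N_k^{-1}$, observing that the eigenvalues of $M_k$ are reciprocals of those of $N_k$, showing the latter form a complex-conjugate pair of magnitude greater than $1$ for large $k$, and concluding $R(M_\infty)<1$ via Proposition \ref{proposition:important}(a). Your version is in fact slightly more careful than the paper's, since you verify the sign of the discriminant $a_k^2 - 4a_k b_k$ explicitly and keep the exact magnitude $\sqrt{\det N_k}=\sqrt{1+a_k+a_k b_k}$ rather than dropping the lower-order $\frac{p}{k}$ terms before solving the characteristic equation.
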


\begin{proof}[Proof]
~\\
\noindent\textbf{Step 1}\\
Writing this in matrix form gives
\begin{equation}
    \begin{bmatrix} x_{k} \\ z_{k} \end{bmatrix} = \underbrace{\begin{bmatrix} \frac{k+p}{k} & -\frac{p}{k} \\ Cp\epsilon k^{p-1}A & 1 \end{bmatrix}}_{\text{$N_k$}} \begin{bmatrix} x_{k+1} \\ z_{k+1} \end{bmatrix}_{\textstyle \raisebox{2pt}{.}}   
\end{equation}
    
\noindent We write this explicitly as
\begin{equation}
    \begin{bmatrix} x_{k+1} \\ z_{k+1} \end{bmatrix} = \underbrace{ [N_k]^{-1}}_{\text{$M_k$}}\begin{bmatrix} x_{k} \\ z_{k} \end{bmatrix}_{\textstyle \raisebox{2pt}{.}} 
    \label{eq:implicit_discr}
\end{equation}

\noindent\textbf{Step 2}\\
First, we find the eigenvalues $\sigma_1$ and $\sigma_2$ of $N_\infty$. Since $M_k = N_k^{-1}$, the eigenvalues of $M_\infty$ will be $\frac{1}{ \sigma_1}$ and $\frac{1}{ \sigma_2}$. $\sigma_1$ and $\sigma_2$ can be found by solving the characteristic equation for $N_k$. We have
\begin{align*}
    &(\frac{k+p}{k}-\sigma)(1-\sigma) - (Cp\epsilon k^{p-1}A)(-\frac{p}{k}) = 0\\
    \overset{k\to\infty}{\Longrightarrow} \hspace{.2cm}& (1-\sigma)^2 + (Cp\epsilon k^{p-1}A)(\frac{p}{k}) = 0\\
    \Longrightarrow \hspace{.2cm} & \sigma_1, \sigma_2 = 1\pm \sqrt{Cp^2\epsilon k^{p-2}}i.
\end{align*}
Now we find the stability function for $M_\infty$. Note that $\sigma_1$ and $\sigma_2$ have the same magnitude, and therefore so do $\lambda_1$ and $\lambda_2$. We have
\begin{equation}
R(M_\infty) = \lim_{k\to\infty}\bigg|\frac{1}{1+ \sqrt{Cp^2\epsilon k^{p-2}}i}\bigg|_{\textstyle \raisebox{2pt}{.}}
\label{eq:thisthingy}
\end{equation}

\noindent\textbf{Step 3}\\
We see that $R(M_\infty)< 1$ for all $p$. Thus, in all cases, the update equations given in (\ref{eq:implicit_update}) converge. Note that the parameter $p$ acts as the order of the exponential convergence rate in equation (\ref{eq:thisthingy}). This hints that this discretization scheme matches the exponential convergence seen in the Euler-Lagrange ODE.
\end{proof}

\subsection{Explicit Euler} \label{section:explicit_euler}

Using the explicit Euler method described in (\ref{eq:exact_explicit}), we discretize the Euler-Lagrange given in (\ref{eq:euler-lag}). Utilizing the identification $t = \delta k$, we get
\begin{align}
    \begin{split}
       \frac{x_{k+1}-x_{k}}{\delta} &= \frac{p}{t}(z_k - x_k) \\
        \frac{z_{k+1}-z_{k}}{\delta} &= - Cp t^{p-1}\nabla f(x_{k}). \label{eq:explicit_update}
    \end{split}
\end{align}

The explicit discretization method is known to be unstable. In section \ref{section:numerical_results}, we include a graph that shows on what interation the explicit method begins to diverge for various $p$.

\begin{theorem} 
\emph{Let $f(x)$ be a function as defined in Theorem (\ref{thm: main_theorem}). Then the discretization of the Euler-Lagrange ODE using the explicit Euler scheme given in (\ref{eq:explicit_update}) will be diverging as $k \to \infty$.}
\end{theorem}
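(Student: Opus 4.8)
The plan is to follow the same three-step methodology applied to the explicit-implicit scheme in Theorem~\ref{thm: main_theorem}. First I would reduce to one dimension via Proposition~\ref{prop:oned} (so $A$ becomes a scalar with $0 < A \le L$ and $\nabla f(x_k) = A x_k$ after shifting $x^* \to 0$), substitute $t = \delta k$ and $\epsilon = \delta^p$ into (\ref{eq:explicit_update}), and rewrite the update as $u_{k+1} = M_k u_k$ with
\begin{equation*}
M_k = \begin{bmatrix} 1 - \frac{p}{k} & \frac{p}{k} \\ -Cp\epsilon k^{p-1}A & 1 \end{bmatrix}.
\end{equation*}
The essential structural point, distinguishing this from the explicit-implicit case, is that because $z_{k+1}$ is built from the old iterate $x_k$ rather than $x_{k+1}$, the bottom-right entry is simply $1$ and no substituted correction term appears; the time-dependent coefficient carries $k^{p-1}$ exactly as in the implicit case.

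For Step 2 I would form the characteristic polynomial. Writing $a_k = p/k$ and $b_k = Cp\epsilon k^{p-1}A$, the eigenvalues solve $\lambda^2 - (2 - a_k)\lambda + (1 - a_k + a_k b_k) = 0$, whose discriminant simplifies to $a_k(a_k - 4b_k)$. Since all parameters are positive, $a_k > 0$ and the sign is governed by $a_k - 4b_k$; because $b_k$ grows at least linearly in $k$ while $a_k \to 0$, one checks that $a_k < 4 b_k$ — equivalently $k^p > 1/(4C\epsilon A)$ — holds for all sufficiently large $k$. Hence the discriminant is eventually negative and the eigenvalues form a complex-conjugate pair, so their common magnitude satisfies $R(M_k)^2 = \det M_k = 1 - a_k + a_k b_k = 1 - \frac{p}{k} + Cp^2\epsilon A\, k^{p-2}$.

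Step 3 is then immediate by passing to the limit. For $p = 2$ the $k^{p-2}$ factor is constant, giving $R(M_\infty) = \sqrt{1 + 4C\epsilon A}$, which exceeds $1$ whenever $C,\epsilon,A > 0$; for $p > 2$ the term $Cp^2\epsilon A\, k^{p-2}$ diverges, so $R(M_\infty) = \infty$. In either case $R(M_\infty) > 1$, and by the stability criterion of Proposition~\ref{proposition:important} (the natural companion to parts (a)--(b) invoked in the methodology) the dominant eigenvalue amplifies the iterate along its eigendirection, forcing the upper bound on $|x_k - x^*|$ to increase; thus the explicit scheme diverges as $k \to \infty$.

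The main obstacle, though modest, is Step 2: one must rule out the real-eigenvalue regime by verifying the discriminant is eventually negative, since only then does the clean identity $R(M_k) = \sqrt{\det M_k}$ apply. Notably, unlike part (b) of Theorem~\ref{thm: main_theorem}, I do not need the tight bounds $\delta < 1/L$ or $C < 1/(\epsilon L)$ to conclude divergence — establishing $R(M_\infty) > 1$ only requires positivity of the parameters — so those hypotheses play only a bookkeeping role here.
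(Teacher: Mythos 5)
Your proposal is correct and follows essentially the same route as the paper: the same matrix form $M_k$, the same eigenvalue computation, and the same conclusion that $R(M_\infty) > 1$ for all $p \geq 2$ forces divergence. The only difference is one of rigor, not approach: you compute the eigenvalues of $M_k$ exactly at finite $k$, verify the discriminant $a_k(a_k - 4b_k)$ is eventually negative, and use $R(M_k)^2 = \det M_k$, whereas the paper passes to the limit inside the characteristic equation directly, so your version is a slightly more careful rendering of the identical argument.
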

\begin{proof}[Proof] ~\\
\noindent\textbf{Step 1}\\
We rewrite the update equations in (\ref{eq:explicit_update}) in the matrix form defined in (\ref{eq:matrix_form}). We have
\begin{equation}
    \begin{bmatrix} x_{k+1} \\ z_{k+1} \end{bmatrix} = \underbrace{\begin{bmatrix} \frac{k-p}{k} & \frac{p}{k} \\  -Cp\epsilon k^{p-1}A & 1\end{bmatrix}}_{M_k} \begin{bmatrix} x_k \\ z_k \end{bmatrix}_{\textstyle \raisebox{2pt}{.}} 
    \label{eq:explicit_try}
\end{equation}

\noindent\textbf{Step 2}\\
The eigenvalues of $M_\infty$, $\lambda_1$ and $\lambda_2$, satisfy the characteristic equation:
\begin{align*}
    &(\frac{k-p}{k}-\lambda)(1-\lambda) - (-Cp\epsilon k^{p-1}A)(\frac{p}{k}) = 0\\
    \overset{k\to\infty}{\Longrightarrow} \hspace{.2cm}& (1-\lambda)^2 + (Cp\epsilon k^{p-1}A)(\frac{p}{k}) = 0\\
    \Longrightarrow\hspace{.2cm} & \lambda_1, \lambda_2 = 1\pm \sqrt{Cp^2\epsilon k^{p-2}}i.
\end{align*}
Now we find the stability function. Note that $\lambda_1$ and $\lambda_2$ have the same magnitude. We have
\begin{equation*}
R(M_\infty) = \lim_{k\to\infty}|1+ \sqrt{Cp^2\epsilon k^{p-2}}i|.\end{equation*}

\noindent\textbf{Step 3}\\
We see that $R(M_\infty)> 1$ for all $p$. Thus, in all cases, the update equations given in (\ref{eq:explicit_update}) diverge. 
\end{proof}

\section{Numerical Results}
\label{section:numerical_results}
In this section, we present various numerical results which empirically confirm our theoretical findings. Additionally, we explore the performance of a fourth order Runge-Kutta discretization of the Euler-Lagrange ODE (\ref{eq:euler-lag}) as the methods utilized to analyze stability for the Euler methods may be extended in the future for explicit Runge-Kutta methods.  

\subsection{Explicit-Implicit Euler Method}
We can utilize the inequality presented in Theorem \ref{thm: main_theorem} to determine when the method will be exhibiting stable behavior. In Figures (\ref{fig:compare1}) and (\ref{fig:compare2}), we compare the calculated iteration of divergence in tables and the actual results of running the method in the graphs. This is also compared to Nesterov's accelerated method for convex functions (Nesterov-C) \cite{nesterov83}. For these experiments, $f$ is a 5-dimensional quadratic function.

\begin{minipage}{0.5\linewidth}
	\centering
	\begin{tabular}{@{}ccc@{}}
\toprule
$L$ & $\delta$ & Iteration $k$ of divergence \\ \midrule
10  & .01      & 44,445                      \\
10  & .001     & 44,444,445                  \\
100 & .01      & 4,445                       \\
100 & .001     & 4,444,445                   \\ \bottomrule
\end{tabular}

\end{minipage}\hfill
\begin{minipage}{0.45\linewidth}
	\centering
	\includegraphics[width=.9\textwidth]{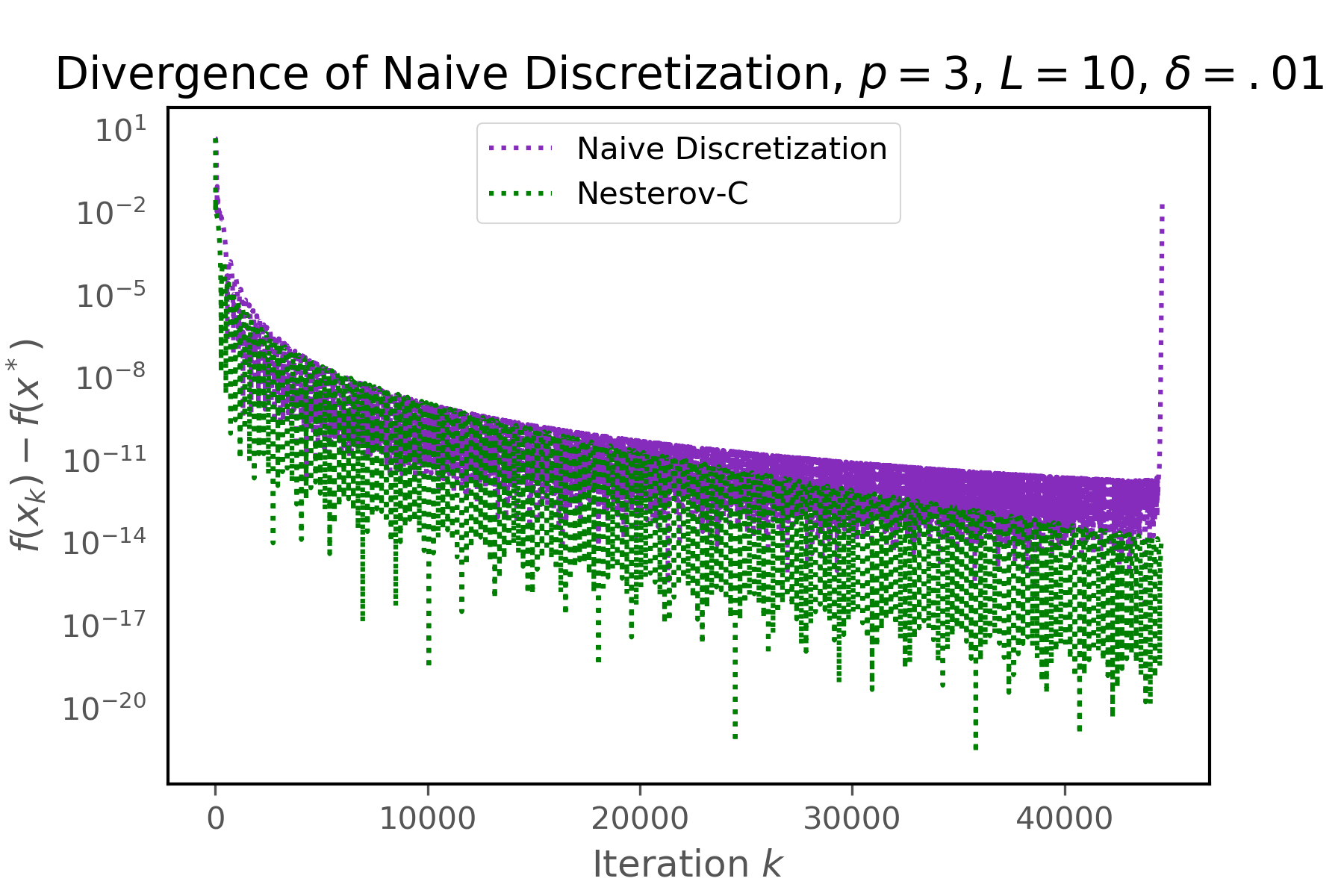}
\end{minipage}
\begin{center}
\captionof{figure}{The table to the left shows how the value of $L$ and $\delta$ impacts the maximum amount of iterations for guaranteed stable behavior when $p = 3$. We see that the bound of $k = 44,445$ is accurate for the case $L = 10, \delta = .01$ in the graph above.} \label{fig:compare1} \end{center}

% \vspace{1cm}

\begin{center}
    \begin{minipage}{0.5\linewidth}
    	\centering
    	\begin{tabular}{@{}ccc@{}}
\toprule
$L$ & $\delta$ & Iteration $k$ of divergence \\ \midrule
10  & .01      & 1,582                       \\
10  & .001     & 158,113                     \\
100 & .01      & 500                         \\
100 & .001     & 50,000                      \\ \bottomrule
\end{tabular}
    \end{minipage}\hfill
    \begin{minipage}{0.45\linewidth}
    	\centering
    	\includegraphics[width=\textwidth]{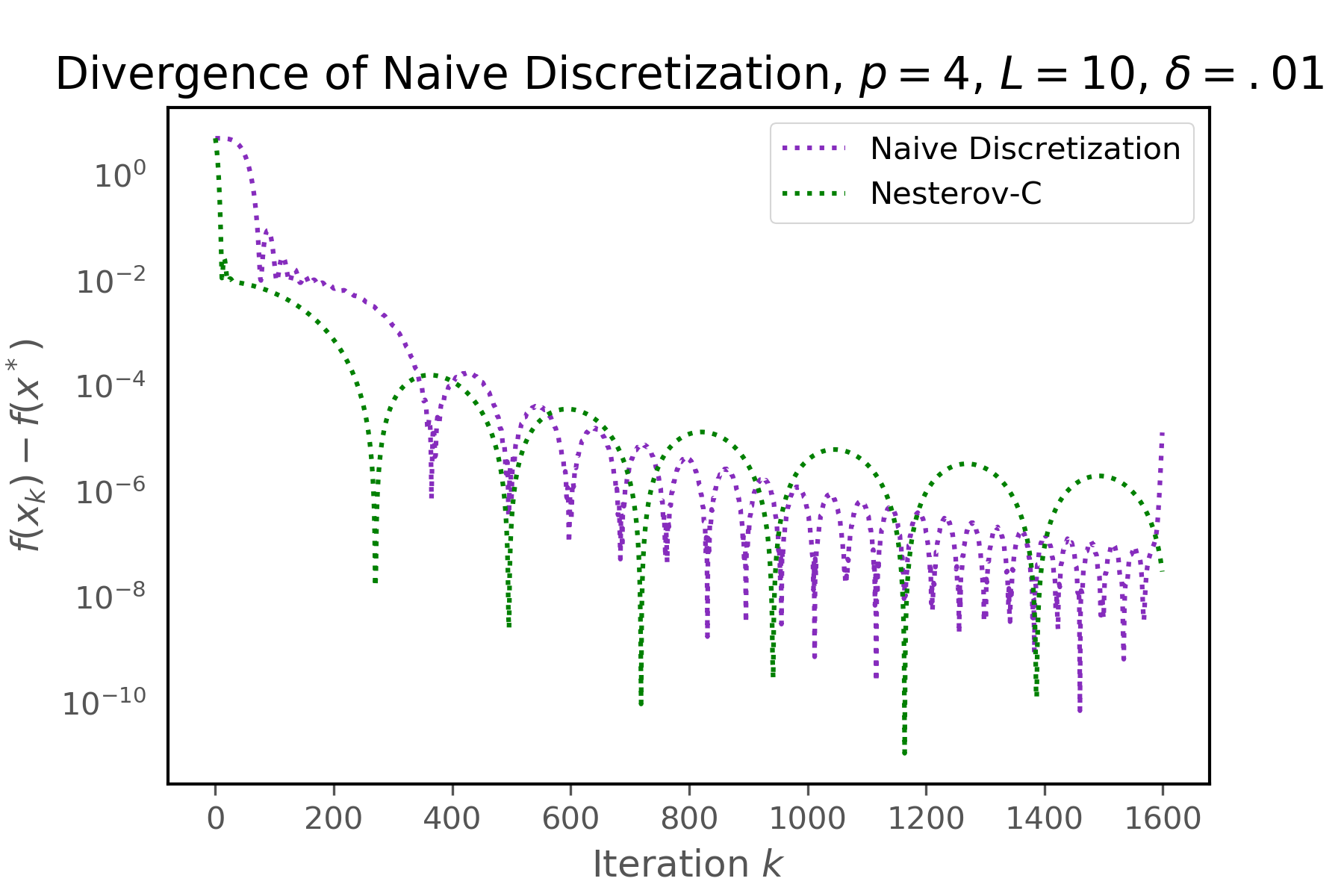}
    \end{minipage}
    \captionof{figure}{The table to the left shows how the value of $L$ and $\delta$ impacts the maximum amount of iterations with guaranteed stable behavior when $p = 4$. The graph above empirically confirms the bound of $k = 1,582$.} \label{fig:compare2}
\end{center}

\subsection{Implicit Euler Method}
Although an implicit Euler discretization is computationally expensive in most circumstances, we note that for a strongly convex quadratic objective function in the form $f(x) = x^TAx$, this is not the case. Empirically, it seems that the implicit Euler discretization achieves the acceleration of the Euler-Lagrange ODE. This observation is shown below in Figure (\ref{fig:imp}).

\begin{figure*}[h!]
    \centering
    \begin{subfigure}[t]{0.45\textwidth}
        \centering
        \includegraphics[width=\textwidth]{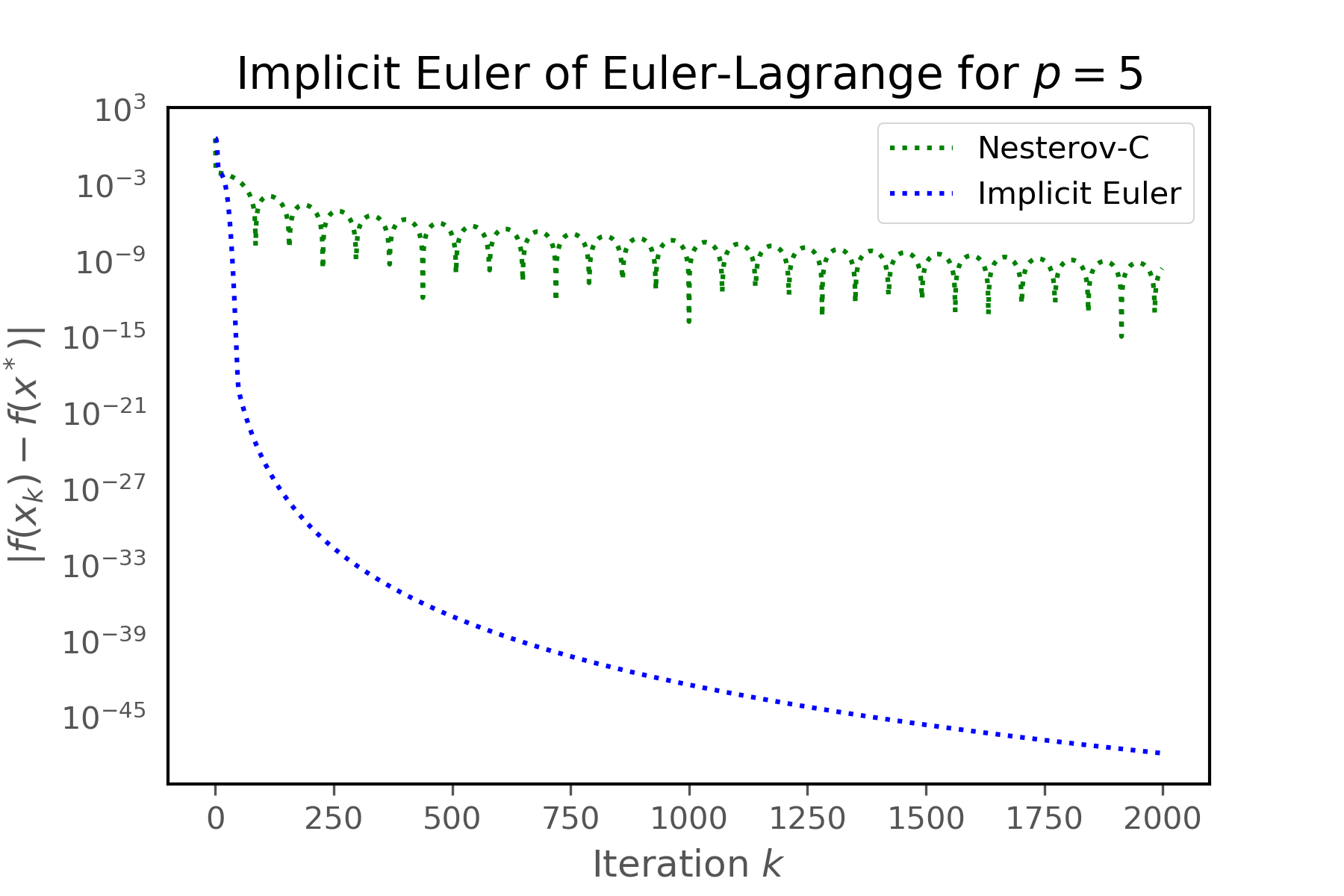}
    \end{subfigure}
    ~ 
    \begin{subfigure}[t]{0.45\textwidth}
        \centering
        \includegraphics[width=\textwidth]{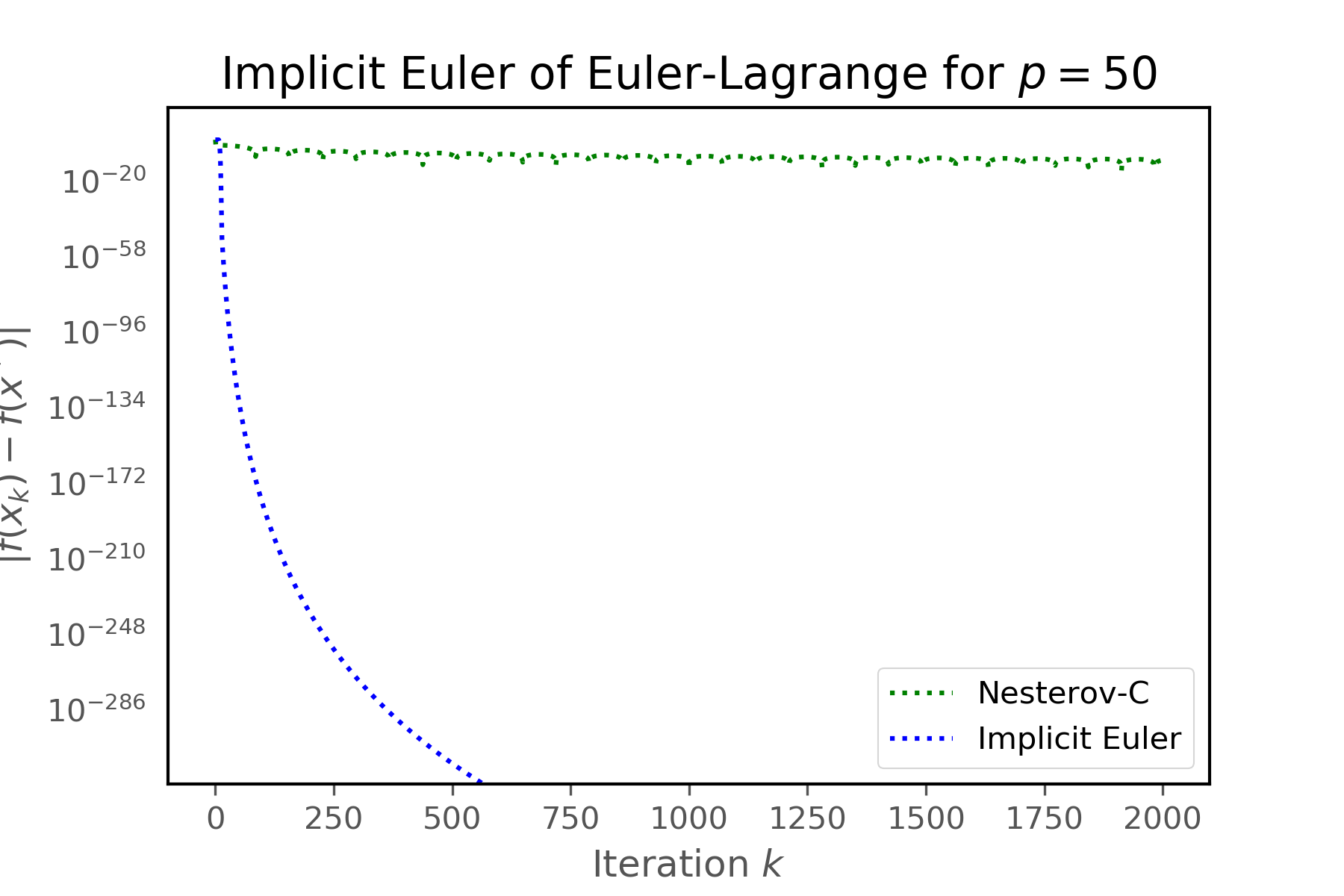}
    \end{subfigure}
    \caption{Implicit Euler Discretization of Euler-Lagrange for $p = 5$ and $p = 50$.} \label{fig:imp}
\end{figure*}

\subsection{Explicit Euler}
Since explicit Euler is not stable, we know that the method will eventually diverge. This is shown on the next page in Figure (\ref{fig:explicit_diverge}). Empirically, we note that the explicit Euler method becomes unstable after a relatively few number of iterations, as shown on the left of Figure (5). On the right of Figure (5), we see the number of iterations before unstable behavior is observed for various values of $p$.

% \newpage

\begin{figure*}[h!]
    \centering
    \begin{subfigure}[t]{0.45\textwidth}
        \centering
        \includegraphics[width=\textwidth]{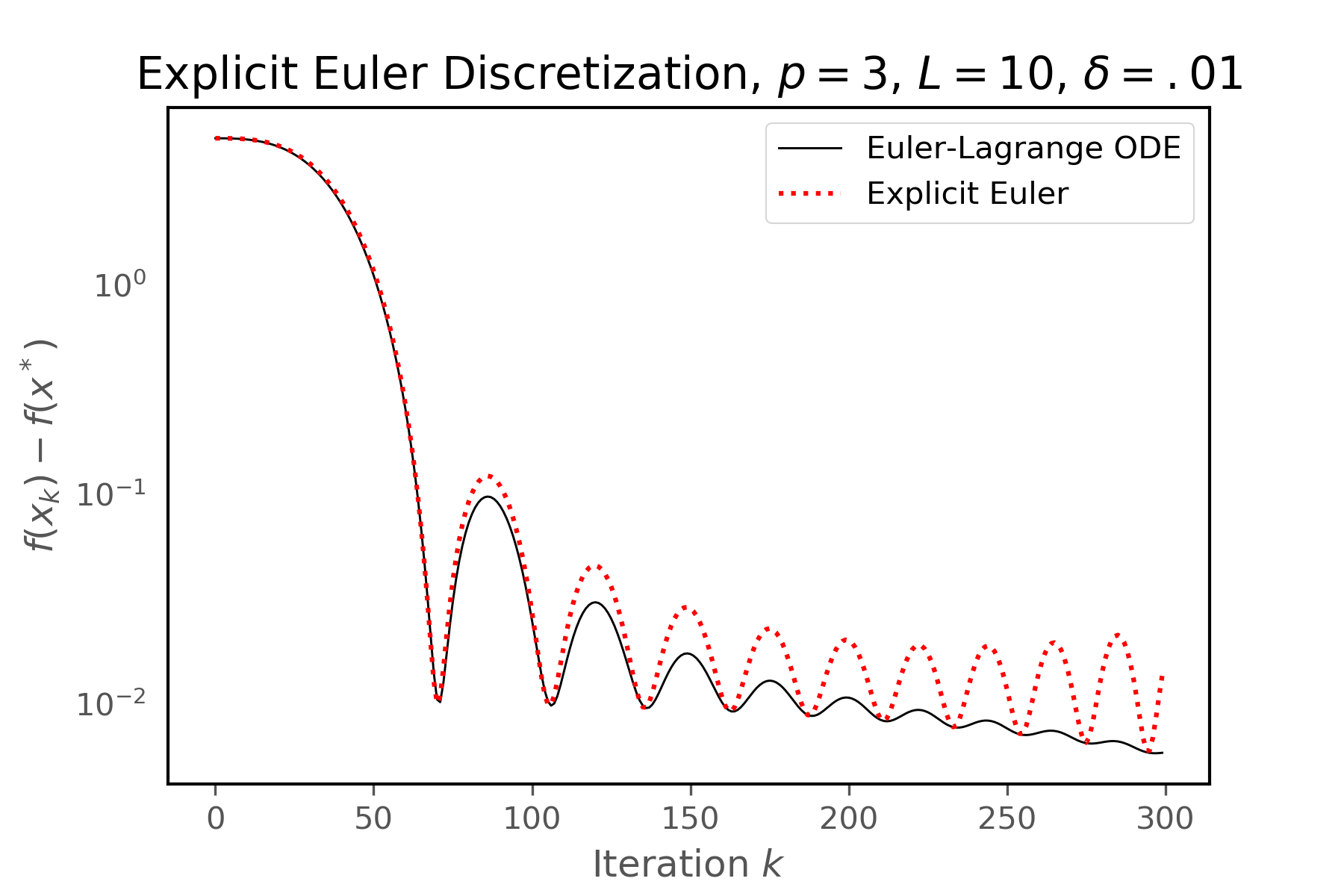}
    \end{subfigure}
    ~ 
    \begin{subfigure}[t]{0.45\textwidth}
        \centering
        \includegraphics[width=.9\textwidth]{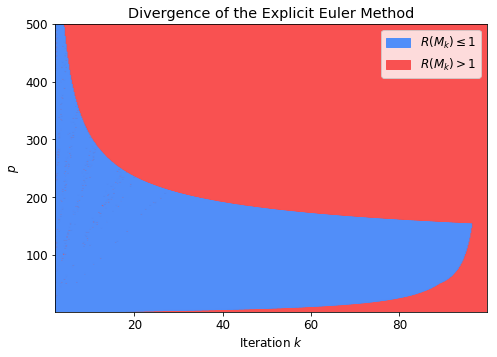}
    \end{subfigure}
    \caption{Divergence of the explicit Euler discretization.} \label{fig:explicit_diverge}
\end{figure*}

\subsection{Empirical Results for Explicit Runge-Kutta Method}
In the following figures, we see that a fourth-order Runge-Kutta discretization of the Euler-Lagrange ODE converges faster than Nesterov-C for $p > 3$. However, since an explicit Runge-Kutta is not stable, we do not have guaranteed convergence \cite{higham1993stiffness}.

\begin{figure*}[h!]
    \centering
    \begin{subfigure}[t]{0.49\textwidth}
        \centering
        \includegraphics[width=\textwidth]{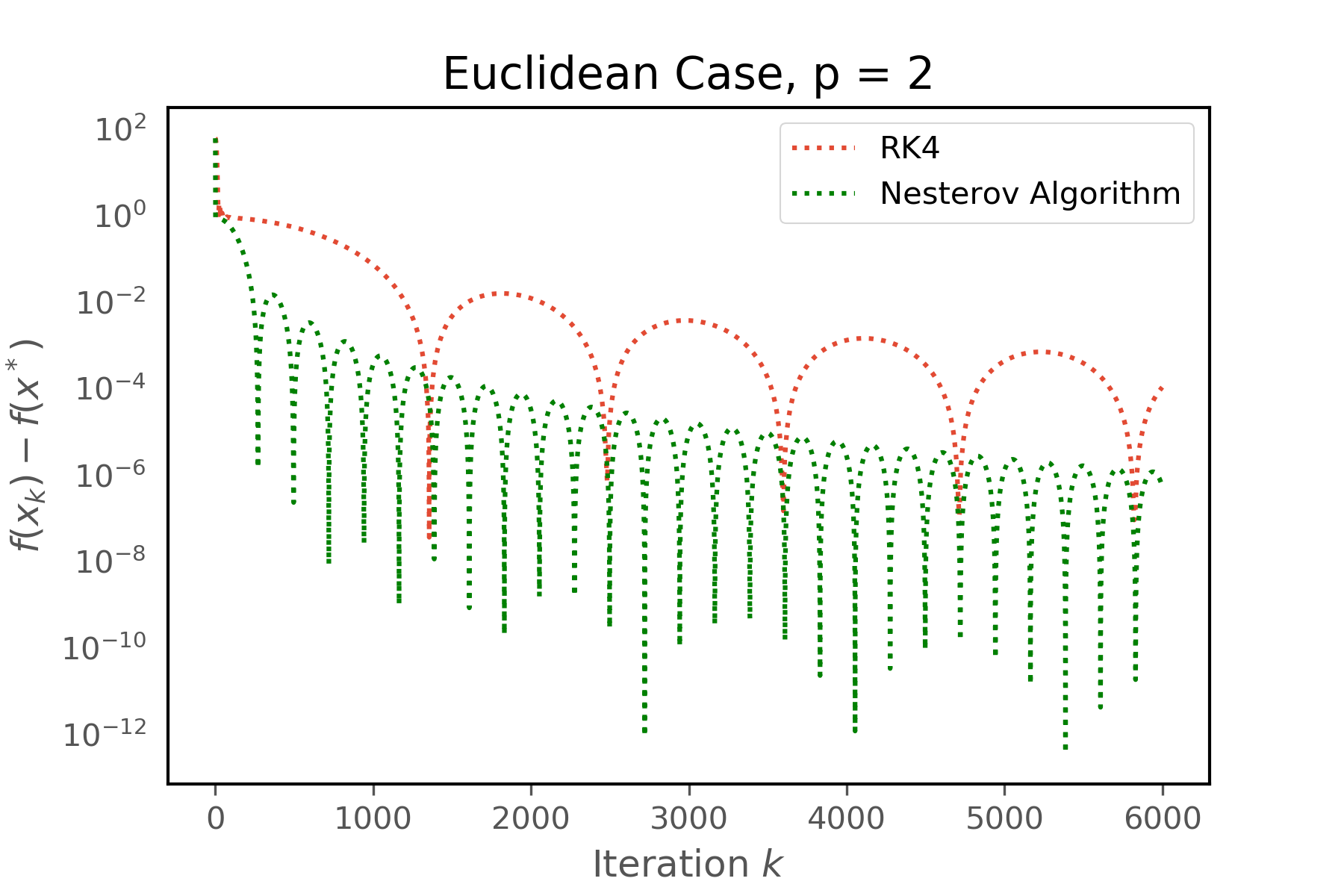}
    \end{subfigure}%
    ~ 
    \begin{subfigure}[t]{0.49\textwidth}
        \centering
        \includegraphics[width=\textwidth]{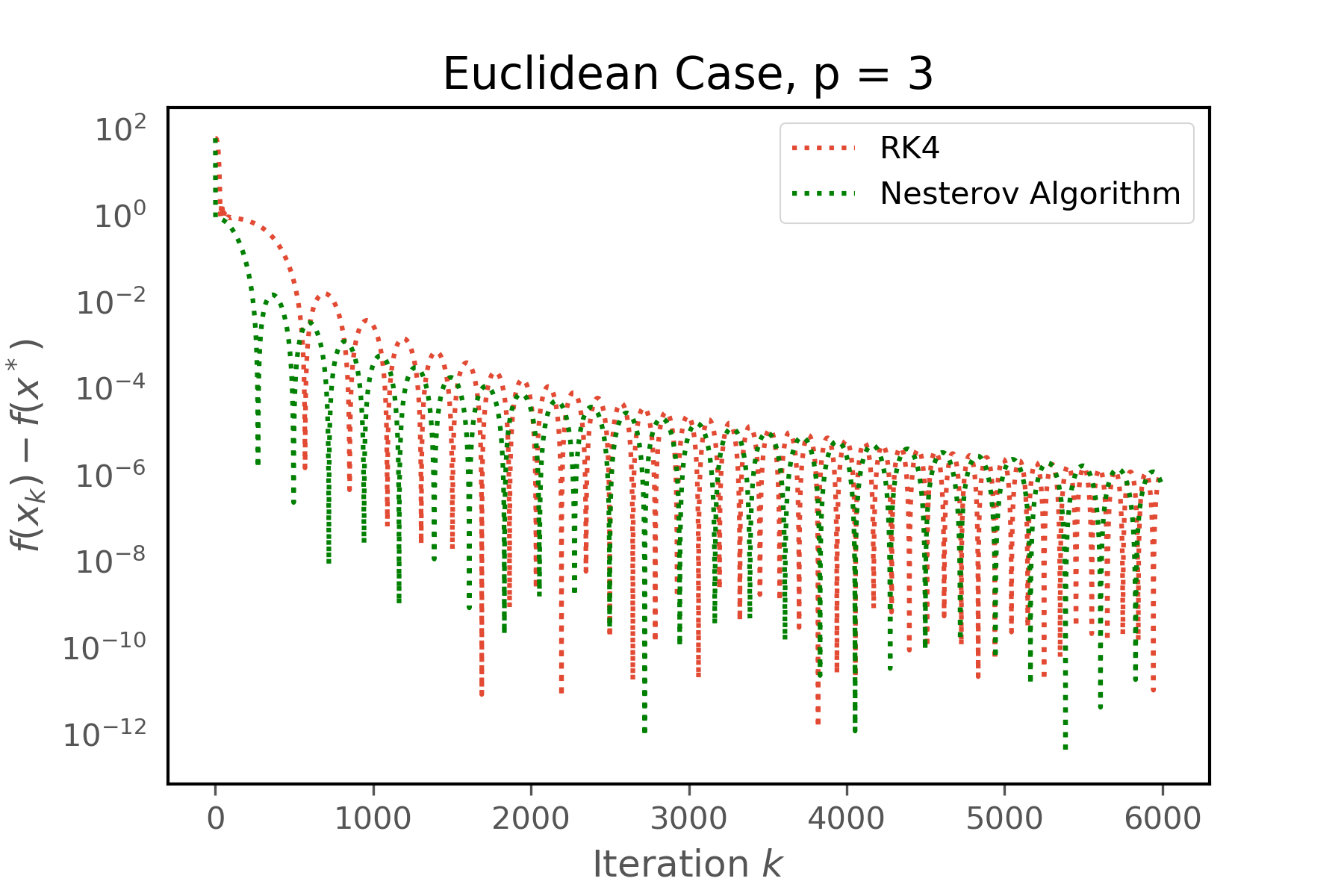}
    \end{subfigure}
    \\
    \begin{subfigure}[t]{0.49\textwidth}
        \centering
        \includegraphics[width=\textwidth]{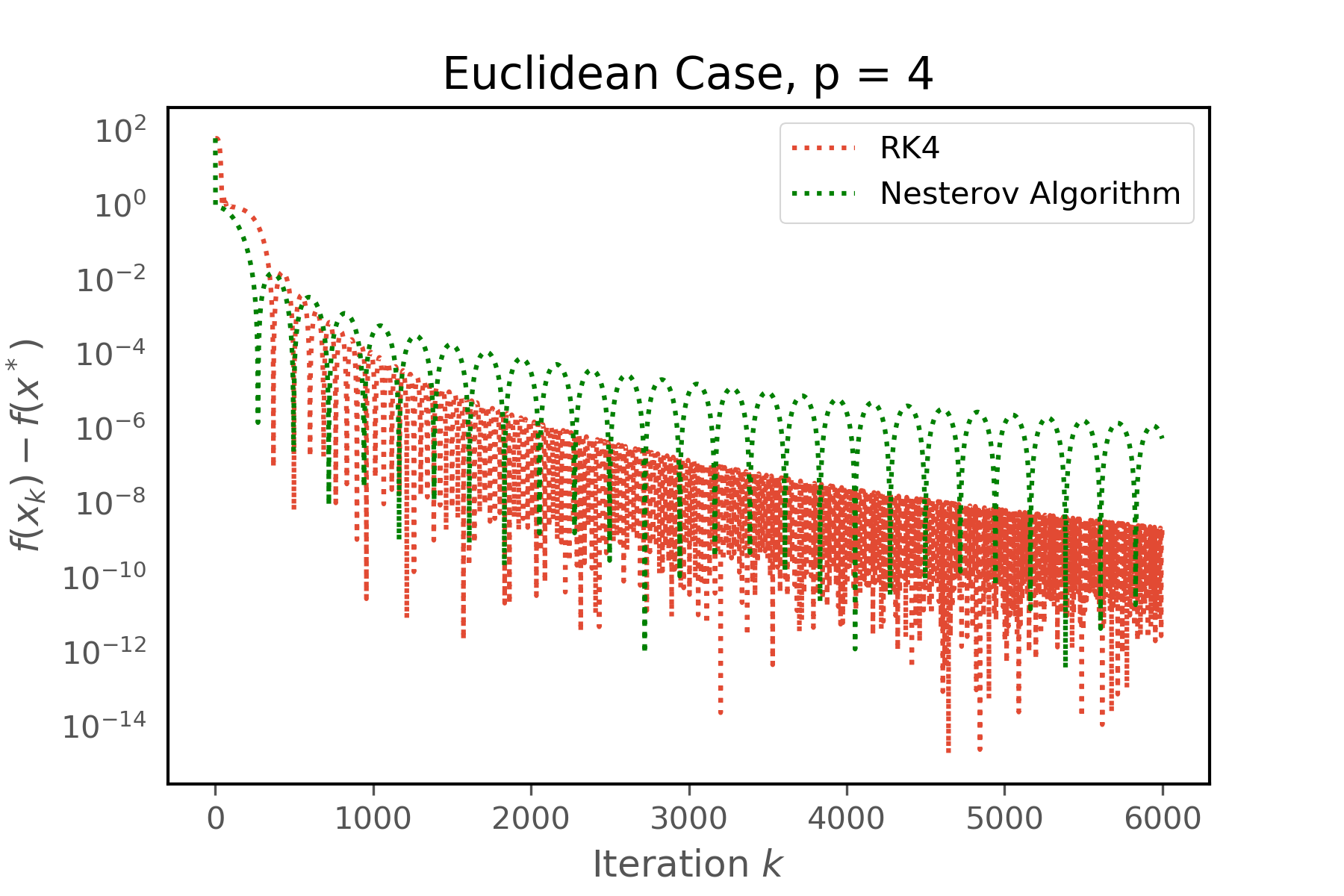}
    \end{subfigure}%
    ~ 
    \begin{subfigure}[t]{0.49\textwidth}
        \centering
        \includegraphics[width=\textwidth]{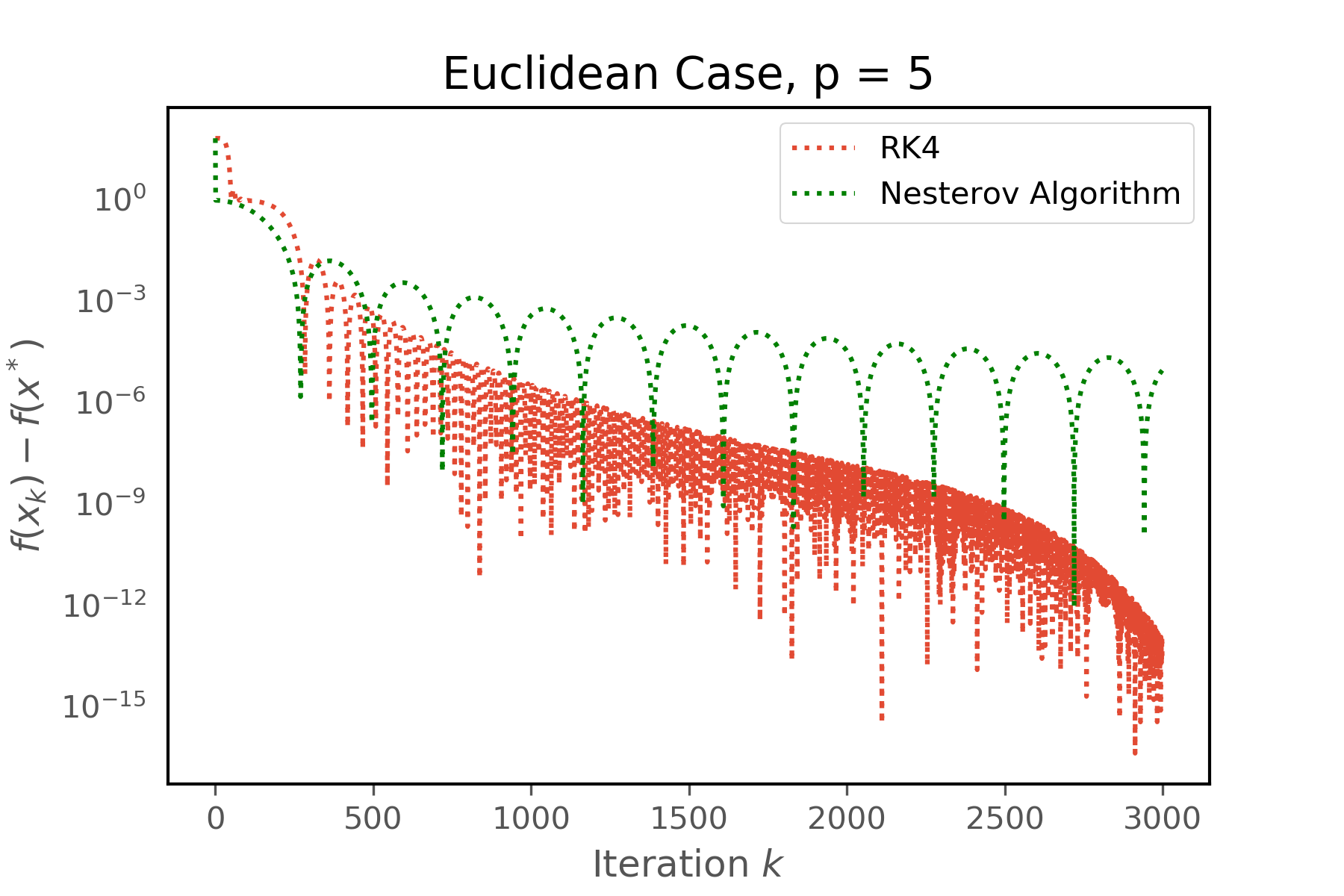}
    \end{subfigure}
    \caption{Fourth order explicit Runge-Kutta discretization of the Euler-Lagrange ODE with $L=10, \delta = .01$.}
\end{figure*}
\section{Discussion}
The methodology used in this paper is an expansion on traditional stability analysis methods in numerical analysis. We first diagonalized a generic set of update equations, rewrote it so that the minimum is at $\vec{0}$, and reduced a high-dimensional problem to a one-dimensional problem that allows for easier analysis. We then determined the end behavior of various sets of update equations by determining the stability function after an infinite number of iterations.

\hspace{.75cm} Our finding that the implicit discretization of the Euler-Lagrange ODE is stable and that the explicit discretization of the Euler-Lagrange ODE is unstable aligns with the general notion that the implicit method is A-stable while the explicit is not. However, the implicit method is impractical to implement as an inverse matrix and inverse gradient is involved. The explicit-implicit discretization scheme of the Euler-Lagrange ODE, on the other hand, gives a system of two update equations that can easily be written explicitly and also has much better behavior than the explicit method. Thus, the explicit-implicit discretization scheme provides for a happy middle ground between the implicit and explicit methods. Through careful analysis, we bound the number of iterations that update equations given by the explicit-implicit discretization of the Euler-Lagrange ODE can be run before divergence.

\hspace{.75cm} Upon reflection of these results, we have identified several possible future directions to take. Empirically, we see that a fourth-order Runge-Kutta discretization of the Euler-Lagrange ODE is able to run for more iterations than the explicit-implicit method before it begins to diverge. For this reason, it would be of interest to use some of the approaches discussed in this paper to bound the number of iterations of guaranteed stable behavior. Furthermore, while we showed where the explicit-implicit method is converging, we have not showed that this convergence rate matches that of the Euler-Lagrange ODE. Showing that each of the discretization methods discussed in this paper achieves the $O\left(\frac{1}{t^p}\right)$ convergence rate before they diverge could result in a more useful algorithm. We also note that our current analysis restricts the objective function to be quadratic and is only analyzed in the Euclidean setting. It would be of interest to expand our analysis to a more general context.

\section{Acknowledgements}
This research was conducted as part of the 2019 REU program at Georgia Institute of Technology and was supported by NSF grant DMS1851843. We would like to thank our adviser Professor Rachel A. Kuske for her guidance and Dr. Andre Wibisono for his help during the research.

\newpage

\bibliographystyle{plain}
\bibliography{ref.bib}

\newpage
\begin{appendix}
\section{Proof of Theorem 
\label{section: AppendixA}
\ref{thm: main_theorem}}
In Proposition \ref{prop:oned}, we showed that the case where $x$ and $\Tilde{x}$ are one-dimensional case is without loss of generality. 
Thus, we begin by considering the problem $f(x) = \frac{1}{2}A(x-x^*)^2 = \frac{1}{2}A(\Tilde{x})^2$  where $x$ and $\Tilde{x}$ are one-dimensional, and later generalize our results to $d$-dimensional $x$.

We have the following update equations:
\begin{equation*}
\begin{pmatrix} \Tilde{x}_{k+1} \\ z_{k+1} \end{pmatrix}=
\underbrace{\begin{pmatrix} 1-\frac{p}{k} & \frac{p}{k} \\ -Cp\epsilon(k+1)^{p-1}(\frac{k - p}{k}) A & 1-Cp\epsilon(k+1)^{p-1} (\frac{p}{k})A \end{pmatrix}}_{\text{$M_k$}}
\begin{pmatrix} \Tilde{x}_{k} \\ z_{k} \end{pmatrix}_{\textstyle \raisebox{2pt}{.}} 
\end{equation*}

Next, we analyze the end behavior of the this algorithm for various $p$ by looking at the eigenvalues of $M_\infty=\lim_{k\to\infty}M_k$ for one-dimensional $x$ and determining the stability function. 

\subsection{$p=2$}
We solve for the eigenvalues of $M_\infty$ by setting the characteristic polynomial of this matrix equal to 0. In the characteristic equation, we omit terms that go to 0 as $k\to\infty$. We have
\begin{align*}
    0=\det(M_\infty-\lambda I)&= \lim_{k\to\infty}\det\begin{pmatrix} 1-\frac{p}{k}-\lambda & \frac{p}{k} \\ -Cp\epsilon(k+1)^{p-1}(\frac{k - p}{k}) A & 1-Cp\epsilon(k+1)^{p-1} (\frac{p}{k})A-\lambda \end{pmatrix}\\
    &=\lambda^2+\lambda(Cp^2\epsilon A-2) + 1\\
    &=\lambda^2+\lambda(4C\epsilon A-2) + 1.
\end{align*}

Now, let $c = 4C\epsilon A>0$. Since the theorem makes the assumption that $C<\frac{1}{\epsilon L}$, we have that $C<\frac{1}{\epsilon A}$ for a one dimensional problem. Thus, we have $c<4$. This gives the following eigenvalues:
\begin{align*}
\lambda_1&=\frac{-c+2+\sqrt{c^2-4c}}{2}=\frac{2-c}{2}+\frac{\sqrt{4c-c^2}i}{2}\\
\lambda_2&=\frac{-c+2-\sqrt{c^2-4c}}{2}=\frac{2-c}{2}-\frac{\sqrt{4c-c^2}i}{2}.
\end{align*}\\
Because $|\lambda_1|=|\lambda_2|=1$, the stability function $R(M_\infty) = 1$.
Thus,
$\begin{pmatrix} \Tilde{x}_{\infty} \\ z_{\infty} \end{pmatrix}$ will be stable when $p=2$. 
\subsection{$p>2$}
We solve for the eigenvalues of $M_\infty$, once again omitting terms in the characteristic equation that go to 0 as $k\to\infty$. We have
\begin{align*}
    0=\det(M_\infty-\lambda I)&= \lim_{k\to\infty}\det\begin{pmatrix} 1-\frac{p}{k}-\lambda & \frac{p}{k} \\ -Cp\epsilon(k+1)^{p-1}(\frac{k - p}{k}) A & 1-Cp\epsilon(k+1)^{p-1} (\frac{p}{k})A-\lambda \end{pmatrix}\\
    &=\lim_{k\to\infty}\bigg(\lambda^2+\lambda(Cp^2\epsilon k^{p-2}A-2)+1\bigg)\\
    &= \lambda^2+\lambda(c-2)+1, \text{ where $c=\lim_{k\to\infty}Cp^2\epsilon k^{p-2}A$.}
\end{align*}
Next, we solve for the eigenvalues: 
\begin{align*}
\lambda_1&=\frac{-c+2+\sqrt{c^2-4c}}{2}\\
\lambda_2&=\frac{-c+2-\sqrt{c^2-4c}}{2}.
\end{align*}\\
Note that $R(M_\infty)=|\lambda_2|\gg 1$. Thus, the solution for $\begin{pmatrix} \Tilde{x}_{\infty} \\ z_{\infty} \end{pmatrix}$ is unstable.

\subsection{After How Many Iterations Does the Explicit-Implicit Method Diverge When $p>2$}
In order to determine on which iteration the explicit-implicit method starts to diverge for each $p>2$,
we find the eigenvalues of $M_k$.\\

Let $a_k = \frac{p}{k}$ and let $b_k=Cp\epsilon(k+1)^{p-1}A$. We have

\begin{equation*}
    M_k=\begin{pmatrix} 1-\frac{p}{k} & \frac{p}{k} \\ -Cp\epsilon(k+1)^{p-1}(\frac{k - p}{k}) A & 1-Cp\epsilon(k+1)^{p-1} (\frac{p}{k})A \end{pmatrix}=
    \begin{pmatrix}
    1-a_k & a_k\\
    -b_k+a_k b_k & 1-a_k b_k
    \end{pmatrix}_{\textstyle \raisebox{2pt}{.}} 
\end{equation*}
The characteristic equation for $M_k$ is 
\begin{equation*}
    \lambda^2 + \lambda(a_k b_k+a_k-2)+(1-a_k)=0
\end{equation*}
% The eigenvalues for this are:
% \begin{align*}
%     \lambda_1 &= \frac{-ab-a+2+\sqrt{a^2b^2+a^2-6a^2b+4ab}}{2} \\
%     \lambda_2 &= \frac{-ab-a+2-\sqrt{a^2b^2+a^2-6a^2b+4ab}}{2}
% \end{align*}

which gives the following eigenvalues:
\begin{align}
    \lambda_1 &= \frac{-a_k b_k-a_k+2+\sqrt{(a_k b_k+a_k-2)^2-4(1-a_k)}}{2}   \\
    \lambda_2 &= \frac{-a_k b_k-a_k+2-\sqrt{(a_k b_k+a_k-2)^2-4(1-a_k)}}{2}. 
\end{align}

% \newpage

\begin{claim}
After going out enough iterations such that $k>p$,
we never have divergence when $|\lambda_1|=|\lambda_2|$.
\end{claim}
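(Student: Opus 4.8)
The plan is to avoid estimating the two magnitudes directly and instead read the product $\lambda_1\lambda_2$ off the characteristic equation, since that product is identical whether the roots are real or complex. From the characteristic polynomial $\lambda^2 + \lambda(a_k b_k + a_k - 2) + (1 - a_k) = 0$ derived just above the claim, Vieta's formula gives $\lambda_1 \lambda_2 = 1 - a_k$; equivalently this constant term is $\det M_k$, and a one-line expansion confirms it equals $1 - a_k$ because the two $a_k b_k$ contributions cancel.

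The first substantive step is to record that the hypothesis $k > p$ forces $a_k = p/k \in (0,1)$, so that $0 < 1 - a_k < 1$. The crux is then the elementary observation that, by multiplicativity of the complex modulus, the assumption $|\lambda_1| = |\lambda_2|$ yields
\[
|\lambda_1|^2 = |\lambda_1|\,|\lambda_2| = |\lambda_1 \lambda_2| = |1 - a_k| = 1 - a_k.
\]
Hence $R(M_k) = |\lambda_{\max}| = \sqrt{1 - a_k} < 1$, and by the reasoning in the proof of Proposition \ref{proposition:important} (together with the definition of divergence) the iterate norm does not grow at such a step, so no divergence can occur while the magnitudes coincide.

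I expect no serious obstacle; the one point worth stating carefully is that the single modulus computation above disposes of every way the magnitudes can agree at once — the complex-conjugate case (where equality of magnitudes is automatic), the repeated-root case, and the sign-flip case $\lambda_1 = -\lambda_2$. If one prefers an explicit case split instead, the sign-flip case is ruled out immediately: $\lambda_1 = -\lambda_2$ would force $\lambda_1^2 = -(1 - a_k) = a_k - 1 < 0$, which is impossible for a real eigenvalue once $k > p$. In every admissible scenario the common magnitude is therefore exactly $\sqrt{1 - a_k}$, which lies strictly below $1$, establishing the claim.
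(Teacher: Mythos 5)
Your proof is correct, and it takes a genuinely different route from the paper's. The paper argues by case analysis on the discriminant: it asserts that $|\lambda_1|=|\lambda_2|$ forces $(a_kb_k+a_k-2)^2-4(1-a_k)\leq 0$, writes out the complex-conjugate eigenvalues explicitly, and computes the common modulus by hand as $\sqrt{(\mathrm{Re}\,\lambda)^2+(\mathrm{Im}\,\lambda)^2}=\sqrt{1-a_k}$. You instead read the product off the constant term of the characteristic polynomial, $\lambda_1\lambda_2 = 1-a_k = \det M_k$, and get $|\lambda_1|^2 = |\lambda_1\lambda_2| = 1-a_k$ in one line from multiplicativity of the modulus. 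Both arrive at the same value $\sqrt{1-p/k}<1$ for $k>p$, but your version buys two things. First, it eliminates the discriminant computation and the complex/repeated-real case split entirely. Second, it quietly repairs a small lacuna in the paper's argument: the paper's opening assertion that equal magnitudes force a nonpositive discriminant overlooks the real opposite-sign possibility $\lambda_1=-\lambda_2\neq 0$, which has positive discriminant; your observation that the product $1-a_k$ is strictly positive (using $k>p$) rules this case out, and in any event your modulus identity covers it without needing to. What the paper's explicit computation buys in exchange is the concrete form of the eigenvalues (real and imaginary parts), which makes the boundary with the real-distinct regime visible and feeds directly into the subsequent Claim 2 and the divergence inequality in equation (A.1)--(\ref{eqn: dddd}); your argument, being purely Vieta-based, does not expose that structure, but for the claim as stated it is the cleaner proof.
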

\begin{proof} 
We only have $|\lambda_1|=|\lambda_2|$ when the eigenvalues are complex or when the eigenvalues are the same real value. That is,
\begin{equation}
    |\lambda_1|=|\lambda_2|\Longrightarrow(a_k b_k+a_k-2)^2-4(1-a_k)\leq 0.
    \label{eq:1}
\end{equation}

When (\ref{eq:1}) is true, the eigenvalues can be written as
\begin{align*}
    \lambda_1 &= \frac{-a_k b_k-a_k+2}{2}+\frac{\sqrt{4(1-a_k)-(a_k b_k+a_k-2)^2}}{2}i   \\
    \lambda_2 &= \frac{-a_k b_k-a_k+2}{2}-\frac{\sqrt{4(1-a_k)-(a_k b_k+a_k-2)^2}}{2} i.
\end{align*}
Thus, the magnitudes of these eigenvalues can be computed as follows:
\begin{align*}
|\lambda_1|=|\lambda_2|&=\sqrt{\bigg(\frac{a_k b_k+a_k-2}{2}\bigg)^2 +\bigg(\frac{\sqrt{4(1-a_k)-(a_k b_k+a_k-2)^2}}{2}\bigg)^2}\\
&=\sqrt{1-a_k}\\
&=\sqrt{1-\frac{p}{k}}\\
&<1.
\end{align*}
\end{proof}
\begin{claim}
We never have divergence when $|\lambda_1|>|\lambda_2|$.
\end{claim}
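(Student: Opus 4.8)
The plan is to prove the stronger statement that $R(M_k) = |\lambda_1| < 1$ throughout this regime, so that the iterations are in fact converging and a fortiori never diverging. The hypothesis $|\lambda_1| > |\lambda_2|$ already tells me the two eigenvalues are real and distinct, so the radicand $(a_k b_k + a_k - 2)^2 - 4(1 - a_k)$ is strictly positive and I may work directly with the explicit real roots.

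First I would read off the trace and determinant from the characteristic equation: $\lambda_1 \lambda_2 = 1 - a_k$ and $\lambda_1 + \lambda_2 = 2 - a_k - a_k b_k$. Having gone out past $k > p$ gives $a_k = p/k < 1$, hence $\lambda_1\lambda_2 = 1 - a_k > 0$, so the two real eigenvalues share a common sign. Since $\lambda_1$ carries the $+\sqrt{\cdot}$ it is the algebraically larger root; if both roots were negative then $\lambda_1$ would be the one nearer zero and we would have $|\lambda_1| < |\lambda_2|$, contradicting the hypothesis. I therefore conclude both eigenvalues are positive and $R(M_k) = \lambda_1$.

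The core step is to bound $\lambda_1$. Setting $s := a_k + a_k b_k$ so that the trace is $2 - s$, I can write $\lambda_1 = 1 - \tfrac{s}{2} + \tfrac{1}{2}\sqrt{D}$ with $D = s^2 - 4s + 4a_k$. Because $s > 0$, the desired inequality $\lambda_1 < 1$ is equivalent to $\sqrt{D} < s$, and squaring (both sides positive) reduces this to $D < s^2$, i.e.\ to $a_k < s = a_k(1 + b_k)$, i.e.\ to $b_k > 0$. Since $b_k = Cp\epsilon(k+1)^{p-1}A$ is a product of strictly positive factors, this final inequality is automatic, yielding $\lambda_1 < 1$ and hence $R(M_k) < 1$.

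I expect the only delicate point to be the sign bookkeeping of the second paragraph: the hypothesis $|\lambda_1| > |\lambda_2|$ must be used to rule out the both-negative branch and pin the eigenvalues as positive, after which the algebra of the third paragraph collapses to the triviality $b_k > 0$. I would also note for completeness that the excluded trace-zero case would force $\lambda_1\lambda_2 < 0$, which is incompatible with $1 - a_k > 0$, so it never arises here and nothing beyond the equal-modulus case of the previous claim is needed.
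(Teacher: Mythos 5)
Your proof is correct and takes essentially the same route as the paper's: both arguments reduce to comparing $\sqrt{D}$ with $s = a_k + a_k b_k$, where $D - s^2 = -4a_k b_k < 0$ is the decisive computation --- the paper runs this as a contradiction from $|\lambda_1| > 1$, while you run it forward to get the (slightly stronger) strict bound $\lambda_1 < 1$. Your determinant-based sign bookkeeping ($\lambda_1\lambda_2 = 1 - a_k > 0$ for $k > p$, so the hypothesis forces both eigenvalues positive) is a careful justification of the step the paper merely asserts, namely that $|\lambda_1| > |\lambda_2|$ requires $-a_k b_k - a_k + 2 > 0$.
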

\begin{proof}
In order to have $|\lambda_1|>|\lambda_2|$, we must have $-a_k b_k-a_k+2>0$.  \\

Suppose for the sake of contradiction that we have $|\lambda_1|>1\Longrightarrow-a_k b_k-a_k+2>0\Longrightarrow \lambda_1>0$.
\begin{align*}
    & |\lambda_1|>1\\
    \Longrightarrow & \frac{-a_k b_k-a_k+2+\sqrt{(a_k b_k+a_k-2)^2-4(1-a_k)}}{2}>1\\
    \Longrightarrow & -a_k b_k-a_k+2+\sqrt{(a_k b_k+a_k-2)^2-4(1-a_k)}>2\\
    \Longrightarrow & (a_k b_k+a_k-2)^2-4(1-a_k)>(a_k b_k+a_k)^2\\
    \Longrightarrow & -4a_k b_k>0.
    &\Rightarrow\!\Leftarrow
\end{align*}
\end{proof}

% \newpage

By Claim 1 and Claim 2, we can only have divergence when $|\lambda_2|>|\lambda_1|\Longrightarrow -a_k b_k-a_k+2<0$. Thus, it is enough to look at the magnitude of $\lambda_2$, when it is real, to determine when divergence happens:
% \textbf{Condition 1 For Divergence}
% \begin{align*}
%     |\lambda_1|\neq|\lambda_2| &\Longrightarrow(a_k b_k+a_k-2)^2-4(1-a_k)> 0\\
%     &\Longrightarrow a_k b_k+a_k-2 > 2\sqrt{1-a_k}
% \end{align*}
% \textbf{Condition 2 For Divergence}
\begin{align}
\begin{split}
    & R(M_k) = |\lambda_2|>1\\
    \Longrightarrow & \frac{-a_k b_k-a_k+2-\sqrt{(a_k b_k+a_k-2)^2-4(1-a_k)}}{2} <-1\\
    \Longrightarrow & -a_k b_k-a_k+2-\sqrt{(a_k b_k+a_k-2)^2-4(1-a_k)}<-2\\
    \Longrightarrow &\hspace{.1cm} a_k b_k+a_k+\sqrt{(a_k b_k+a_k-2)^2-4(1-a_k)}>4.
    \label{eqn: dddd}
\end{split}
\end{align}

\subsection{Simplifying the Inequality}

Our goal is to get an expression that determines the number of iterations $k$ allowed for a given $p, \epsilon$, and $A$. To do so, we begin with the inequality arrived at in (\ref{eqn: dddd}). Divergence happens when 
\begin{equation*}
    a_k b_k + a_k + \sqrt{(a_k b_k + a_k - 2)^2 - 4(1 - a_k)}=a_k b_k + a_k + \sqrt{(a_k b_k + a_k)^2 - 4a_k b_k}>4.
\end{equation*}

Now let $x = a_k b_k$ and $y =a_k b_k+a_k= x + a_k$. We have that the iterations of the update equation will be converging or stable when
\begin{equation*}
    y + \sqrt{y^2 - 4x} \leq 4.
\end{equation*}

To simplify this inequality, we consider a right triangle with hypotenuse $y$ and sidelengths $s_1=2\sqrt{x}$ and $s_2=\sqrt{y^2-s_1^2}=\sqrt{y^2-4x}$. A visual representation of this triangle is shown below.
\begin{center}
    \begin{tikzpicture}
\draw (0,0) -- (4,0) node[midway,below] {$s_1 = 2 \sqrt{x}$}
   -- (0,4) node[midway,right] {$y$}
   -- (0,0) node[midway,left] {$s_2 = \sqrt{y^2-4x}$};
\end{tikzpicture}
\end{center}

Our inequality for when convergence or stability is achieved becomes
\begin{equation*}
    y + s_2 \leq 4.
\end{equation*}
By the Triangle Inequality, $ y + s_2 \leq 4 \Longrightarrow s_1<4$.\\

Thus, we have that stability or convergence is achieved when
\begin{align*}
    &s_1<4\\
    \Longrightarrow & a_k b_k<4\\
    \Longrightarrow & \frac{p}{k}Cp\epsilon(k+1)^{p-1}A<4\\
    \Longrightarrow & \frac{p}{k}Cp\epsilon(k)^{p-1}A<\frac{p}{k}Cp\epsilon(k+1)^{p-1}A<4\\
    \Longrightarrow & k<\left(\frac{4}{CAp^2\epsilon}\right)^{\frac{1}{p-2}}_{\textstyle \raisebox{2pt}{.}} 
\end{align*}
\subsection{Generalizing to $d$-Dimensional $x$} \label{appendix:generalize}
Now we generalize our one-dimensional results to a $d$-dimensional problem. Consider the following problem where $x$ and $\Tilde{x}$ are $d$-dimensional vectors. Written as in Proposition \ref{prop:oned}, we have
\begin{equation*}
f(x)=\frac{1}{2}(x-x^*)^T A (x-x^*)= \frac{1}{2}\Tilde{x}^TD\Tilde{x}.
\end{equation*}

This $d$-dimensional problem will be converging to the minimizer or stable when each of its dimensions are doing so. Thus, iterations are converging or stable when $k$ satisfies 
\begin{equation*}
    k<\left(\frac{4}{CD_i p^2\epsilon}\right)^{\frac{1}{p-2}}
\end{equation*}
for all integer $i$ in the range 1 to $d$, inclusive.\\

From this, it is easy to see that the largest eigenvalue of $A$ dictates when the iterations become unstable. Thus, if $f(x)$ is $L$-smooth, the explicit-implicit method will exhibit stable behavior when 
\begin{equation*}
    k<\left(\frac{4}{CL p^2\epsilon}\right)^{\frac{1}{p-2}}_{\textstyle \raisebox{2pt}{.}}
\end{equation*}
\end{appendix}
\end{document}